\newtheorem{theorem}{Theorem}
\newtheorem{lemma}{Lemma}
\theoremstyle{remark}
\newtheorem{remark}{Remark}
\theoremstyle{definition}
\newtheorem{definition}{Definition}
\begin{document}

\begin{frontmatter}

\title{Higher-order infinite horizon variational problems\\
in discrete quantum calculus}

\author{Nat\'{a}lia Martins}
\ead{natalia@ua.pt}

\author{Delfim F. M. Torres}
\ead{delfim@ua.pt}

\address{Center for Research and Development in Mathematics and Applications\\
Department of Mathematics, University of Aveiro, 3810-193 Aveiro, Portugal}

% --------------------------------------------------

\begin{abstract}
We obtain necessary optimality conditions
for higher-order infinite horizon problems
of the calculus of variations
via discrete quantum operators.
\end{abstract}

% ------------------------------------------

\begin{keyword}
Euler--Lagrange difference equations
\sep quantum calculus
\sep calculus of variations
\sep transversality conditions
\sep infinite horizon problems.

\smallskip

\MSC[2010] 39A13 \sep 49K05.
\end{keyword}

\end{frontmatter}

% ------------------------------------------

\section{Introduction}

Quantum difference operators are receiving an increase of interest
due to their applications in physics, economics and the calculus
of variations --- see
\cite{Almeida,withMiguel01,Cresson,malina:martins,MyID:187}
and references therein. Here we develop the quantum variational
calculus in the infinite horizon case.
Let $q >1$ and denote by $\mathcal{Q}$ the set
$\mathcal{Q}:=q^{\mathbb{N}_0}=\{q^n: n \in \mathbb{N}_0\}$.
In what follows $\sigma$ denotes the function defined by
$\sigma(t):=qt$ for all $t\in \mathcal{Q}$. For any
$k\in \mathbb{N}$, $\sigma^{k}:=\sigma\circ\sigma^{k-1}$,
where $\sigma^{0} = id$.
It is clear that $\sigma^{k}(t)=q^k t$.
For $f:\mathcal{Q}\rightarrow\mathbb{R}$
we define $f^{\sigma^k}:=f \circ \sigma^k$.
Fix $a \in \mathcal{Q}$ and $r \in \mathbb{N}$.
We are concerned with the following
higher-order $q$-variational problem:
\begin{equation}
\label{problem}
\begin{gathered}
\mathcal{J}(x(\cdot)) = \int_{a}^{+\infty}
L\left(t,(x\circ \sigma^r)(t), D_q[x\circ \sigma^{r-1}](t), \ldots,
D_q^{r-1}[x\circ \sigma](t), D^r_q[x](t)\right)d_q t  \longrightarrow \max \\
x(a)=\alpha_0, \quad D_q[x](a)=\alpha_1, \quad \ldots \quad D^{r-1}_q[x](a)=\alpha_{r-1},
\end{gathered}
\end{equation}
where $(u_1,\ldots, u_r, u_{r+1})\rightarrow L(t,u_1,\ldots,u_{r+1})$ is a
$C^1(\mathbb{R}^{r+1}, \mathbb{R})$ function for any $t \in \mathcal{Q}$,
and $\alpha_0$, \ldots, $\alpha_{r-1}$ are given real numbers.
The results of the paper are trivially generalized for the case of functions
$x:\mathcal{Q} \rightarrow\mathbb{R}^n$, $n \in \mathbb{N}$, but for simplicity of
presentation we restrict ourselves to the scalar case, \textrm{i.e.}, $n=1$.
In Section~\ref{Preliminary results} we present some
preliminary results and basic definitions.
Main results appear in Section~\ref{sec:mr}:
in \S\ref{Fundamental Lemmas} we prove some fundamental lemmas
of the calculus of variations for infinite horizon $q$-variational problems;
an Euler--Lagrange type equation and transversality conditions
for \eqref{problem} are obtained in \S\ref{E-L and Transversality}.

% ------------------------------------------

\section{Preliminaries}
\label{Preliminary results}

Let $f$ be a function defined on $\mathcal{Q}$.
By $D_q$ we denote the Jackson $q$-difference operator:
\begin{equation}
\label{eq:der:jac}
D_q[f](t):=\frac{f(qt)-f(t)}{(q-1)t}
\quad \forall t \in \mathcal{Q}.
\end{equation}
The higher-order $q$-derivatives are defined in the usual way:
the $r$th $q$-derivative, $r \in \mathbb{N}$,
of $f:\mathcal{Q}\rightarrow \mathbb{R}$ is the function
$D_{q}^{r}[f]: \mathcal{Q}\rightarrow \mathbb{R}$ given by
$D_{q}^{r}[f]:=D_{q}[D_{q}^{r-1}[f]]$,
where $D_{q}^{0}[f]:=f$.

The Jackson $q$-difference operator \eqref{eq:der:jac}
satisfies the following properties.

\begin{theorem}[\textrm{cf.} \cite{Kac}]
Let $f$ and $g$ be functions defined on $\mathcal{Q}$ and $t\in \mathcal{Q}$.
One has:
\begin{enumerate}
\item  $D_q[f] \equiv 0$ on $I$ if and only if $f$ is constant;

\item $D_q\left[  f+g\right]  \left(  t\right)  =D_q\left[
f\right]  \left(  t\right)  +D_q\left[  g\right]  \left(  t\right)$;

\item $D_q\left[  fg\right]  \left(  t\right)  =D_q\left[
f\right]  \left(  t\right)  g\left(  t\right)  +f\left(  qt\right)
D_q\left[  g\right]  \left(  t\right)$;

\item $\displaystyle D_q\left[  \frac{f}{g}\right]  \left(  t\right)
=\frac{D_q\left[  f\right]  \left(  t\right)  g\left(  t\right)
-f\left(  t\right)  D_q\left[  g\right]  \left(  t\right)  }{g\left(
t\right)  g\left(  qt\right)  }$ if $g\left(
t\right)  g\left(  qt\right)  \neq0$.
\end{enumerate}
\end{theorem}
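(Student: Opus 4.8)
The plan is to verify all four identities directly from the definition \eqref{eq:der:jac} of the Jackson operator, since each of items~2--4 reduces to elementary algebra involving only the value at $t$ and its image at $qt$. For item~2 I would expand $D_q[f+g](t)=\frac{(f+g)(qt)-(f+g)(t)}{(q-1)t}$ and split the single fraction into the two difference quotients, recovering $D_q[f](t)+D_q[g](t)$ at once.

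For item~3 the one substantive step is the standard add-and-subtract manipulation: write the numerator $f(qt)g(qt)-f(t)g(t)$ as $f(qt)\bigl(g(qt)-g(t)\bigr)+g(t)\bigl(f(qt)-f(t)\bigr)$, then divide by $(q-1)t$ to recognize $f(qt)D_q[g](t)+g(t)D_q[f](t)$. It is worth flagging that the shifted factor $f(qt)=(f\circ\sigma)(t)$, rather than $f(t)$, multiplies $D_q[g]$; this asymmetry is the only point where the $q$-Leibniz rule departs from the classical product rule, and it is exactly what the statement records.

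For item~4 I would proceed by either of two routes, both routine once $g(t)g(qt)\neq 0$ is assumed. The cleaner option is to apply item~3 to the factorization $f=(f/g)\cdot g$ and then solve the resulting identity for $D_q[f/g](t)$. Alternatively one expands $\frac{f(qt)/g(qt)-f(t)/g(t)}{(q-1)t}$ directly, places the two inner fractions over the common denominator $g(t)g(qt)$, and applies the same add-and-subtract trick to the numerator $f(qt)g(t)-f(t)g(qt)$, which rearranges into $g(t)\bigl(f(qt)-f(t)\bigr)-f(t)\bigl(g(qt)-g(t)\bigr)$ and hence into $D_q[f](t)g(t)-f(t)D_q[g](t)$ after dividing by $(q-1)t$.

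The only place calling for an argument beyond formal manipulation is the forward implication of item~1 (I read the stray ``$I$'' as $\mathcal{Q}$), and this is the mild obstacle. Here $D_q[f]\equiv 0$ forces $f(qt)=f(t)$ for every $t\in\mathcal{Q}$, and I would then exploit the orbit structure of $\mathcal{Q}=\{q^n:n\in\mathbb{N}_0\}$: since $q^{n}=\sigma(q^{n-1})$, a trivial induction on $n$ yields $f(q^n)=f(q^{n-1})=\cdots=f(1)$ for all $n$, so $f$ is constant. The converse is immediate, as a constant function has vanishing difference quotient. Everything else is bookkeeping; the connectedness of $\mathcal{Q}$ under the single map $\sigma$ is what makes the constancy conclusion legitimate.
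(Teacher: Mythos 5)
Your verification is correct and complete: items 2--4 follow by exactly the add-and-subtract manipulations you describe, and your orbit argument for item 1 (iterating $f(q^n)=f(q^{n-1})$ down to $f(1)$, with the stray ``$I$'' read as $\mathcal{Q}$) is the right way to get constancy on $\mathcal{Q}=q^{\mathbb{N}_0}$. The paper itself gives no proof of this theorem --- it is quoted from the reference \cite{Kac} --- so there is nothing to compare against; your direct computation from the definition of $D_q$ is the standard argument.
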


Let $a \in \mathcal{Q}$ and $b:=aq^n\in \mathcal{Q}$
for some $n \in \mathbb{N}$. The $q$-integral of $f$
from $a$ to $b$ is defined by
$$
\int_{a}^{b}f(t) d_q t:= a(q-1) \sum_{k=0}^{n-1}q^{k}f(aq^{k}).
$$

\begin{theorem}[\textrm{cf.} \cite{Kac}]
If $a,b,c \in \mathcal{Q}$, $a
\leq c \leq b$, $\alpha, \beta \in \mathbb{R}$,
and $f,g:\mathcal{Q}\rightarrow \mathbb{R}$, then
\begin{enumerate}
\item $ \int_{a}^{b}\left(\alpha f(t) + \beta g(t) \right)
    d_q t= \alpha \int_{a}^{b}f(t)d_q t +
    \beta \int_{a}^{b}g(t)d_q t$;

\item $\int_{a}^{b}  f(t)d_q t =
- \int_{b}^{a} f(t)d_q t$;

\item $ \int_{a}^{a}  f(t)d_q t=0$;

\item $ \int_{a}^{b}  f(t)d_q t =
    \int_{a}^{c}  f(t)d_q t + \int_{c}^{b} f(t)d_q t$;

\item If $f(t)> 0$ for all $a \leq  t< b$, then $
    \ \int_{a}^{b}  f(t)d_q t > 0$;

\item $\int_{a}^{b}f(t)D_q[g](t)d_q t
=\left[f(t)g(t)\right]_{t=a}^{t=b}
-\int_{a}^{b}D_q[f](t)g(qt)d_q t$
\ \ ($q$-integration by parts formula);

\item $ \int_{a}^{b}D_q[f](t) d_q t= f(b)-f(a)$
\ \ (fundamental theorem of $q$-calculus);

\item $D_q\left[\int_{a}^{t}f(\tau)d_q \tau\right](t)=f(t)$.
\end{enumerate}
\end{theorem}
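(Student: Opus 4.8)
The unifying observation is that, because $a,b\in\mathcal{Q}$ with $b=aq^{n}$, every $q$-integral in the statement is a \emph{finite} sum over the lattice points $aq^{0},aq^{1},\dots,aq^{n-1}$. Consequently items 1--5 are elementary facts about finite sums, and I would dispose of them first. Linearity (item 1) is immediate from the distributivity of the defining sum; item 3 follows since $a=b$ forces $n=0$ and the sum is empty; and item 5 holds because $a>0$, $q-1>0$, $q^{k}>0$, while each factor $f(aq^{k})$ is one of the values $f(t)$ with $a\le t<b$, so every summand is strictly positive and the (nonempty, when $b>a$) sum is positive. Item 2 I would read as the defining convention extending the symbol $\int_{b}^{a}$ to reversed limits, consistent with all the remaining items. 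For additivity (item 4), writing $c=aq^{m}$ and $b=aq^{n}$ with $0\le m\le n$, I would split the sum defining $\int_{a}^{b}$ at index $m$, recognise the first block as $\int_{a}^{c}$, and reindex the second block by $k=m+j$; pulling out $q^{m}$ turns $a$ into $c$ and exhibits exactly $\int_{c}^{b}$.

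The two substantive items are the fundamental theorem (item 7) and its dual (item 8). For item 7 I would substitute the definition \eqref{eq:der:jac} of $D_q$ into the defining sum: the prefactor $a(q-1)q^{k}$ cancels the denominator $(q-1)aq^{k}$ of $D_q[f](aq^{k})$, leaving the telescoping sum $\sum_{k=0}^{n-1}\bigl(f(aq^{k+1})-f(aq^{k})\bigr)=f(aq^{n})-f(a)=f(b)-f(a)$. Item 8 I would prove directly from $F(t):=\int_{a}^{t}f(\tau)\,d_q\tau$: evaluating $F$ at consecutive lattice points $t$ and $qt$ differs by the single new summand $a(q-1)q^{k}f(aq^{k})$ with $t=aq^{k}$, and dividing by $(q-1)t$ recovers $f(t)$.

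The integration-by-parts formula (item 6) is where I expect the only real care to be needed, since one must track which factor carries the forward shift. My plan is to apply the product rule (Theorem~1, item 3) to the product $gf$, giving $D_q[gf](t)=D_q[g](t)f(t)+g(qt)D_q[f](t)$, then $q$-integrate from $a$ to $b$ and invoke the already-proved item 7 on the left, obtaining $[fg]_{a}^{b}=\int_{a}^{b}D_q[g](t)f(t)\,d_q t+\int_{a}^{b}g(qt)D_q[f](t)\,d_q t$. Rearranging isolates $\int_{a}^{b}f(t)D_q[g](t)\,d_q t$ on one side and leaves $[fg]_{a}^{b}-\int_{a}^{b}D_q[f](t)g(qt)\,d_q t$ on the other, which is exactly the claim. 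The main obstacle, such as it is, is purely bookkeeping: applying the product rule to $gf$ rather than $fg$ so that the shift lands on $g$ as the statement requires.
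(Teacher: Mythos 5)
Your proposal is correct: since $b=aq^{n}$, every item reduces to a manipulation of the finite defining sum, the telescoping argument gives item~7, the difference $F(qt)-F(t)$ gives item~8, and combining the product rule with item~7 gives the integration-by-parts formula with the shift correctly landing on $g$. The paper itself offers no proof of this theorem --- it is quoted from the reference [Kac] as a preliminary --- and your verification is essentially the standard argument given there, so there is nothing to compare beyond noting that your treatment of item~2 as a convention for reversed limits (rather than a consequence of a definition based at $0$, as in [Kac]) is the natural reading of the definition adopted in this paper.
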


As usual, we define
$$
\int_{a}^{+\infty} f(t) d_q t := \lim_{b\rightarrow+\infty}
\int_a^b f(t) d_q t
$$
provided this limits exists (in $\overline{\mathbb{R}}
:=\mathbb{R}\cup\{-\infty, +\infty\}$).
We say that the improper $q$-integral converges
if this limit is finite; otherwise, we say that the
improper $q$-integral diverges.

In what follows all intervals are $q$-intervals, that is,
for $a,b \in \mathcal{Q}$,
$[a,b] := \{ t \in \mathcal{Q} : a \le t \le b\}$ and
$[a, +\infty[ := \{ t \in \mathcal{Q} : a \le t < +\infty \}$.

\begin{definition}
We say that $x:[a,+\infty[\rightarrow\mathbb{R}$
is an admissible path for problem \eqref{problem} if
$x(a)=\alpha_0,  D_q[x](a)=\alpha_1,
\ldots,  D^{r-1}_q[x](a)=\alpha_{r-1}$.
\end{definition}

There are several definitions of optimality
for problems with unbounded domain (see,
\textrm{e.g.}, \cite{Brock,Gale,SS,Weiz}).
Here we follow Brock's notion of optimality.

\begin{definition}
\label{def:weakMax}
Suppose that $a, T, T^\prime\in \mathcal{Q}$
are such that $T^\prime \geq T > a$.
We say that $x_{\ast}$ is weakly maximal to problem
\eqref{problem} if and only if $x_{\ast}$ is an admissible path and
\begin{multline*}
\lim_{T\rightarrow+\infty}\inf_{T^\prime \geq
T}\int_{a}^{T^\prime}\Biggl[L(t,(x\circ \sigma^r)(t),
D_q[x\circ \sigma^{r-1}](t), \ldots,
D_q^{r-1}[x\circ \sigma](t), D^r_q[x](t))\\
- L(t,(x_\ast\circ \sigma^r)(t), D_q[x_\ast\circ \sigma^{r-1}](t), \ldots,
D_q^{r-1}[x_\ast\circ \sigma](t), D^r_q[x_\ast](t))\Biggr]d_q t \le 0
\end{multline*}
for all admissible $x$.
\end{definition}

Note that in the case where the functional $\mathcal{J}$
of problem \eqref{problem} converges for all admissible paths,
the weak maximal path is optimal in the sense of the usual
definition of optimality. However, if every admissible
function $x$ yields an infinite value to the functional,
using the usual definition of optimality
each admissible path is an optimal path, showing that the standard
definition is not appropriate for problems with an unbounded domain.

Lemmas~\ref{lemmaderivadacomposta} and \ref{lemma_funcoes_admissiveis_1}
are an immediate consequence of the definition
of Jackson $q$-difference operator.

\begin{lemma}
\label{lemmaderivadacomposta}
For any $f:\mathcal{Q}\rightarrow \mathbb{R}$ and $t\in\mathcal{Q}$,
$D_q[f](\sigma(t))=\frac{1}{q} D_q[f\circ \sigma](t)$.
\end{lemma}

\begin{lemma}
\label{lemma_funcoes_admissiveis_1}
Assume $\eta:[a,+\infty[ \rightarrow \mathbb{R}$ is such that
$D_q^{i}[\eta](a)=0$ for all $i=0,1,\ldots, r$.
Then, $D_q^{i-1}[\eta\circ \sigma](a)=0$
for each $i=1,\ldots, r$.
\end{lemma}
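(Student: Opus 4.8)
The plan is to reduce the whole statement to the vanishing of the iterated $q$-derivatives of $\eta$ at the shifted point $\sigma(a)=qa$, and then to obtain that vanishing from the hypotheses at $a$ directly from the defining formula \eqref{eq:der:jac}. Two ingredients are needed: a formula relating the iterated $q$-derivatives of the composition $\eta\circ\sigma$ to those of $\eta$, and a ``shift'' identity transporting the vanishing of $D_q^{j}[\eta]$ from $a$ to $qa$.

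First I would establish, by induction on $k$, the identity
$$D_q^{k}[\eta\circ\sigma]=q^{k}\,\bigl(D_q^{k}[\eta]\bigr)\circ\sigma,\qquad k\in\mathbb{N}_0.$$
The case $k=0$ is trivial and the case $k=1$ is exactly Lemma~\ref{lemmaderivadacomposta} rewritten as $D_q[\eta\circ\sigma]=q\,(D_q[\eta])\circ\sigma$. For the inductive step I would apply $D_q$ to the level-$k$ identity, pull out the constant $q^{k}$ by linearity, and invoke Lemma~\ref{lemmaderivadacomposta} once more with $f=D_q^{k}[\eta]$ to rewrite $D_q\bigl[(D_q^{k}[\eta])\circ\sigma\bigr]$ as $q\,(D_q^{k+1}[\eta])\circ\sigma$. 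Evaluating the resulting formula at $t=a$ gives $D_q^{i-1}[\eta\circ\sigma](a)=q^{i-1}D_q^{i-1}[\eta](qa)$, so the claim reduces to showing that $D_q^{j}[\eta](qa)=0$ for $j=0,\ldots,r-1$.

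Next I would prove this reduced statement using only the definition \eqref{eq:der:jac}. Writing $D_q$ applied to $g=D_q^{j}[\eta]$ at $t=a$ and solving for $g(qa)$ yields the shift identity
$$D_q^{j}[\eta](qa)=D_q^{j}[\eta](a)+(q-1)a\,D_q^{j+1}[\eta](a).$$
For each $j=0,\ldots,r-1$ both summands on the right vanish, since $D_q^{j}[\eta](a)=0$ and $D_q^{j+1}[\eta](a)=0$ are among the hypotheses; combining with the previous paragraph then gives $D_q^{i-1}[\eta\circ\sigma](a)=0$ for $i=1,\ldots,r$.

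The computations are routine; the only point requiring care is the bookkeeping of orders. The subtlety is that transporting the vanishing of the $q$-derivatives from $a$ to $\sigma(a)$ costs one order of differentiation, so it is \emph{essential} that the hypothesis runs all the way up to order $r$ rather than $r-1$: precisely these $r+1$ conditions at $a$ produce the $r$ conditions needed for $\eta\circ\sigma$ at $a$.
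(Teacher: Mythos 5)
Your proof is correct: the identity $D_q^{k}[\eta\circ\sigma]=q^{k}(D_q^{k}[\eta])\circ\sigma$ follows by induction from Lemma~\ref{lemmaderivadacomposta}, the shift identity $D_q^{j}[\eta](qa)=D_q^{j}[\eta](a)+(q-1)a\,D_q^{j+1}[\eta](a)$ is exactly what the definition \eqref{eq:der:jac} gives, and together they yield the claim, with your closing remark correctly pinpointing why the hypothesis must run up to order $r$. The paper gives no proof at all --- it merely asserts that the lemma is an immediate consequence of the definition of the Jackson $q$-difference operator --- and your argument is a faithful and complete fleshing-out of precisely that intended route.
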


The following basic result will be useful
in the proof of our main result
(Theorem~\ref{main result}).

\begin{theorem}[\textrm{cf.} \cite{Serge:Lang}]
\label{Serge:Lang}
Let $S$ and $T$ be subsets of a  normed vector space. Let $f$ be a
map defined on $T \times S$, having values in some complete normed
vector space. Let $v$ be adherent to $S$ and $w$ adherent to $T$.
Assume:
\begin{enumerate}
\item $\lim_{x\rightarrow v} f(t,x)$ exists for each $t \in T$;

\item $\lim_{t\rightarrow w} f(t,x)$ exists uniformly for  $x \in S$.
\end{enumerate}
Then the limits
$\lim_{t\rightarrow w} \lim_{x\rightarrow v} f(t,x)$,
$\lim_{x\rightarrow v} \lim_{t\rightarrow w}f(t,x)$,
and $\lim_{(t,x)\rightarrow (w,v)} f(t,x)$
all exist and are equal.
\end{theorem}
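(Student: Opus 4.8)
The statement is the classical Moore--Osgood theorem on the interchange of iterated limits, and completeness of the target space is exactly what makes it work. The plan is to name the two single limits furnished by the hypotheses, namely $g(t):=\lim_{x\to v}f(t,x)$ (well defined for each $t\in T$ by assumption~1) and $h(x):=\lim_{t\to w}f(t,x)$ (well defined for each $x\in S$ by assumption~2, with the convergence uniform in $x$). The three limits in the conclusion are, respectively, $\lim_{t\to w}g(t)$, $\lim_{x\to v}h(x)$, and the joint limit, so everything reduces to producing a single value $L$ and checking that each of them equals it. Throughout, adherence of $v$ to $S$ and of $w$ to $T$ guarantees that every neighbourhood meets $S$, respectively $T$, so the points I select below exist.

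First I would extract a \emph{uniform Cauchy} condition from assumption~2: given $\varepsilon>0$, uniform convergence yields a neighbourhood $W$ of $w$ with $\|f(t,x)-h(x)\|<\varepsilon$ for all $t\in T\cap W$ and all $x\in S$, whence by the triangle inequality $\|f(t,x)-f(t',x)\|<2\varepsilon$ for all $t,t'\in T\cap W$ and all $x\in S$. The decisive step is to let $x\to v$ in this last inequality: since $f(t,\cdot)$ and $f(t',\cdot)$ converge by assumption~1, the non-strict inequality survives the limit and gives $\|g(t)-g(t')\|\le 2\varepsilon$ for all $t,t'\in T\cap W$. Thus $g$ satisfies the Cauchy criterion as $t\to w$, and because the target space is complete, the limit $L:=\lim_{t\to w}g(t)$ exists. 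This already identifies the first iterated limit.

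Next I would show the joint limit equals $L$. Fix $\varepsilon>0$, take $W$ as above shrunk so that in addition $\|g(t)-L\|\le\varepsilon$ for $t\in T\cap W$, choose one fixed $t_1\in T\cap W$, and invoke assumption~1 for that $t_1$ to pick a neighbourhood $V$ of $v$ with $\|f(t_1,x)-g(t_1)\|<\varepsilon$ for $x\in S\cap V$. Then for every $(t,x)\in(T\cap W)\times(S\cap V)$ the estimate
$$\|f(t,x)-L\|\le\|f(t,x)-f(t_1,x)\|+\|f(t_1,x)-g(t_1)\|+\|g(t_1)-L\|$$
bounds each summand by a multiple of $\varepsilon$ (the first by the uniform Cauchy inequality, the others by the two choices just made), so $\lim_{(t,x)\to(w,v)}f(t,x)=L$. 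Finally, the second iterated limit comes for free: from the joint limit, for each fixed $x\in S\cap V$ one lets $t\to w$ inside $\|f(t,x)-L\|\le\varepsilon$ to obtain $\|h(x)-L\|\le\varepsilon$, i.e.\ $\lim_{x\to v}h(x)=L$. Hence all three limits exist and coincide.

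The one genuinely delicate point is the passage $x\to v$ inside the uniform Cauchy inequality: it is legitimate precisely because assumption~1 guarantees that the limits $g(t)$ exist for \emph{each} fixed $t$, and because a non-strict inequality is preserved under limits. Everything else is routine bookkeeping with the triangle inequality, and completeness is used exactly once, to manufacture the value $L$ as the limit of the Cauchy family $\{g(t)\}_{t\to w}$.
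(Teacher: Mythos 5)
The paper does not prove this statement: it is imported with a ``cf.'' citation from Lang's \emph{Undergraduate analysis} and used as a black box in the proof of Theorem~\ref{main result}, so there is no in-paper argument to compare yours against. Your proof is correct and is the standard Moore--Osgood argument (essentially the one in Lang): you correctly extract the uniform Cauchy condition $\|f(t,x)-f(t',x)\|<2\varepsilon$ for $t,t'\in T\cap W$ from hypothesis~2, pass to the limit $x\to v$ using hypothesis~1 (rightly noting that the inequality becomes non-strict), invoke completeness exactly once to manufacture $L=\lim_{t\to w}g(t)$ via the Cauchy criterion, and then obtain the joint limit and the second iterated limit by routine triangle-inequality estimates. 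The adherence hypotheses on $v$ and $w$ are used precisely where needed, to guarantee that the neighbourhoods you select actually meet $S$ and $T$. No gaps.
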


% ------------------------------------------

\section{Main results}
\label{sec:mr}

Before proving our main result (Theorem~\ref{main result}),
we need several preliminaries results. Namely,
we prove in \S\ref{Fundamental Lemmas} a higher-order
$q$-integration by parts formula and
three higher-order fundamental lemmas
for the $q$-calculus of variations.

% ------------------------------------------

\subsection{Fundamental lemmas}
\label{Fundamental Lemmas}

In our results we use the standard convention
that $\sum_{k=1}^{j} \gamma(k) = 0$ whenever $j=0$.

\begin{lemma}[Higher-order $q$-integration by parts formula]
\label{integration-parts-higher-order}
Let $r \in \mathbb{N}$, $a,b \in \mathcal{Q}$,
$a<b$, $f, g:[a,\sigma^r(b)]\rightarrow\mathbb{R}$.
For each $i=1,2,\ldots,r$ we have
\begin{multline*}
\int_a^b f(t) D_q^i[g \circ \sigma^{r-i}](t)d_q t
= (-1)^i \int_a^b \left(\frac{1}{q}\right)^{\frac{i(i-1)}{2}}
D_q^{i}[f](t) g^{\sigma^{r}}(t)d_q t\\
+ \left[ f(t) D_q^{i-1}[g \circ \sigma^{r-i}](t)
+ \sum_{k=1}^{i-1}(-1)^k D_q^{k}[f](t) D_q^{i-1-k}[g \circ
\sigma^{r-i+k}](t) \cdot
\prod_{j=1}^{k}\left(\frac{1}{q}\right)^{i-j}\right]_{a}^{b}.
\end{multline*}
\end{lemma}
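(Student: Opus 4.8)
The plan is to argue by induction on $i$, peeling off a single $q$-derivative at each stage with the first-order $q$-integration by parts formula and then invoking the induction hypothesis on the function $D_q[f]$. Before starting I would record an auxiliary ``shift'' identity, namely that
\[
D_q^{i}[\phi](\sigma(t)) = \frac{1}{q^{i}}\, D_q^{i}[\phi \circ \sigma](t)
\]
for every $\phi:\mathcal{Q}\to\mathbb{R}$ and every $i\in\mathbb{N}$. This follows from Lemma~\ref{lemmaderivadacomposta} by a short secondary induction on $i$: the case $i=1$ is exactly the lemma, and the step applies the lemma to $D_q^{i}[\phi]$ together with the functional identity $(D_q^{i}[\phi])\circ\sigma = q^{-i}\,D_q^{i}[\phi\circ\sigma]$ supplied by the inductive hypothesis.

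For the base case $i=1$ I would apply the first-order integration by parts with $h := g\circ\sigma^{r-1}$ and use that $(g\circ\sigma^{r-1})(qt) = g(q^{r}t) = g^{\sigma^{r}}(t)$. Since the product and the inner sum are empty when $i=1$, the resulting identity is exactly the claimed formula with its trivial sign and boundary term.

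For the inductive step, assume the formula at level $i$ (with $1\le i<r$) and consider $\int_a^b f(t)\,D_q^{i+1}[g\circ\sigma^{r-i-1}](t)\,d_qt$. Writing $D_q^{i+1}=D_q\circ D_q^{i}$ and integrating by parts once yields the boundary contribution $[\,f(t)\,D_q^{i}[g\circ\sigma^{r-i-1}](t)\,]_a^b$ together with the integral $-\,q^{-i}\int_a^b D_q[f](t)\,D_q^{i}[g\circ\sigma^{r-i}](t)\,d_qt$, where the shift identity above (with $\phi=g\circ\sigma^{r-i-1}$, so $\phi\circ\sigma=g\circ\sigma^{r-i}$) has been used to rewrite the evaluation at $qt$. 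I would then apply the induction hypothesis to this last integral with $f$ replaced by $D_q[f]$, which turns each $D_q^{k}[D_q[f]]$ into $D_q^{k+1}[f]$. Multiplying the transported integral term by $-q^{-i}$ produces the $(i+1)$st integral term: the sign becomes $-(-1)^{i}=(-1)^{i+1}$ and the constant collapses to $\left(\tfrac1q\right)^{i+\frac{i(i-1)}2}=\left(\tfrac1q\right)^{\frac{(i+1)i}2}$, exactly the required exponent.

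The hard part will be reconciling the boundary terms. After the substitution I would reindex the transported sum by $k\mapsto k+1$; the new $k=1$ contribution matches the $-q^{-i}D_q[f]\,D_q^{i-1}[g\circ\sigma^{r-i}]$ piece, the signs flip correctly via $-q^{-i}(-1)^{k-1}=q^{-i}(-1)^{k}$, and the $\sigma$-exponent reads $r-i+k-1 = r-(i+1)-1+k$, which aligns with the target bracket. The one genuine computation is the exponent identity for the accumulated powers of $1/q$: one checks that $i+\sum_{j=1}^{k-1}(i-j)$ equals $\sum_{j=1}^{k}\big((i+1)-j\big)$, both being $ki-\tfrac{k(k-1)}2$, so the products $\prod_{j=1}^{k}\left(\tfrac1q\right)^{(i+1)-j}$ demanded at level $i+1$ are reproduced. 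Collecting the matched pieces assembles the $(i+1)$st boundary bracket and closes the induction.
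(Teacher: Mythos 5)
Your proof is correct, but it is organized differently from the paper's. The paper inducts on the order $r$: in the step from $r$ to $r+1$, the cases $i\le r$ are handled by feeding $g^\sigma$ into the induction hypothesis, while the top case $i=r+1$ is handled by writing $D_q^{r+1}[g]=D_q^{r}[D_q[g]]$, applying the hypothesis at index $i=r$ to the pair $(f,D_q[g])$, converting $D_q[g]\circ\sigma^{k}$ into $q^{-k}D_q[g\circ\sigma^{k}]$ via Lemma~\ref{lemmaderivadacomposta}, and finishing with one extra first-order integration by parts at the very end. You instead fix $r$ and induct on $i$, peeling the \emph{outermost} derivative: a single first-order integration by parts moves it onto $f$, your iterated shift identity $D_q^{i}[\phi]\circ\sigma=q^{-i}D_q^{i}[\phi\circ\sigma]$ (a legitimate and easy consequence of Lemma~\ref{lemmaderivadacomposta}) absorbs the evaluation at $qt$, and the hypothesis is then applied to $(D_q[f],g)$. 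This buys you a single uniform induction with no case split and no need to vary $r$ or shift $g$; the paper's version establishes all indices $i\le r$ simultaneously, at the cost of a two-tier argument. Your exponent bookkeeping checks out: both accumulated powers equal $ki-\tfrac{k(k-1)}{2}$, and $i+\tfrac{i(i-1)}{2}=\tfrac{i(i+1)}{2}$ as required. The only blemish is the line ``$r-i+k-1=r-(i+1)-1+k$'', which is false as written and should read $r-i+k-1=r-(i+1)+k$; the left-hand side is the exponent you actually obtain, and it does match the target bracket, so this is a typo rather than a gap.
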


\begin{proof}
We prove the lemma by mathematical induction.
If $r=1$, the result is obviously true from the $q$-integration by parts formula.
Assuming that the result holds for degree $r>1$, we will prove it for $r+1$.
Fix some $i=1,2,\ldots, r$. By the induction hypotheses, we get
\begin{equation*}
\begin{split}
\int_a^b &f(t) D_q^i[g \circ \sigma^{r+1-i}](t)d_q t
= \int_a^b f(t) D_q^i[g^\sigma \circ \sigma^{r-i}](t)d_q t\\
&=\left[f(t) D_q^{i-1}[g^\sigma \circ \sigma^{r-i}](t)
+ \sum_{k=1}^{i-1}(-1)^k D_q^{k}[f](t)  D_q^{i-1-k}[g^\sigma \circ \sigma^{r-i+k}](t)
\cdot \prod_{j=1}^{k}\left(\frac{1}{q}\right)^{i-j}\right]_{a}^{b}\\
&\qquad + (-1)^i \int_a^b \left(\frac{1}{q}\right)^{\frac{i(i-1)}{2}}
D_q^{i}[f](t)(g^\sigma)^{\sigma^{r}}(t)d_q t\\
&= \left[f(t) D_q^{i-1}[g \circ \sigma^{r+1-i}](t)
+ \sum_{k=1}^{i-1}(-1)^k D_q^{k}[f](t)
D_q^{i-1-k}[g \circ \sigma^{r+1-i+k}](t)
\cdot \prod_{j=1}^{k}\left(\frac{1}{q}\right)^{i-j}\right]_{a}^{b}\\
&\qquad + (-1)^i \int_a^b \left(\frac{1}{q}\right)^{\frac{i(i-1)}{2}}
D_q^{i}[f](t) g^{\sigma^{r+1}}(t)d_q t.
\end{split}
\end{equation*}
It remains to prove that the result is true for $i=r+1$.
Note that
$$
\int_a^b f(t) D_q^{r+1}[g](t)d_q t
= \int_a^b f(t)  D_q^{r}[D_q[g]](t)d_q t
$$
and, by the induction hypotheses for degree $r$ and $i=r$,
\begin{multline*}
\int_a^b f(t) D_q^{r+1}[g](t)d_q t
= (-1)^r \int_a^b \left(\frac{1}{q}\right)^{\frac{r(r-1)}{2}}
D_q^{r}[f](t) D_q[g](\sigma^r(t))d_q t\\
+ \left[f(t) D_q^{r-1}[D_q[g]](t)
+ \sum_{k=1}^{r-1}(-1)^k D_q^{k}[f](t)
D_q^{r-1-k}[D_q[g]\circ \sigma^k](t) \cdot
\prod_{j=1}^{k}\left(\frac{1}{q}\right)^{r-j}\right]_{a}^{b}.
\end{multline*}
From Lemma~\ref{lemmaderivadacomposta} we can write that
\begin{multline*}
\int_a^b f(t) D_q^{r+1}[g](t)d_q t
= \left[f(t) D_q^{r}[g](t) + \sum_{k=1}^{r-1}(-1)^k D_q^{k}[f](t)
D_q^{r-k}[g \circ \sigma^k](t)\cdot \left( \frac{1}{q}\right)^{k}
\prod_{j=1}^{k}\left(\frac{1}{q}\right)^{r-j}\right]_{a}^{b}\\
+ (-1)^r \int_a^b \left(\frac{1}{q}\right)^{\frac{r(r-1)}{2}}
\left( \frac{1}{q}\right)^{r} D_q^{r}[f](t)
D_q[g \circ \sigma^r](t)d_q t
\end{multline*}
and, by the $q$-integration by parts formula,
\begin{multline*}
\int_a^b f(t) D_q^{r+1}[g](t)d_q t
= \left[f(t) D_q^{r}[g](t) + \sum_{k=1}^{r-1}(-1)^k D_q^{k}[f](t)
D_q^{r-k}[g \circ \sigma^k](t)\cdot
\prod_{j=1}^{k}\left(\frac{1}{q}\right)^{r+1-j}\right]_{a}^{b}\\
+ \left[(-1)^r D_q^{r}[f](t) g^{\sigma^{r}}(t)\left(
\frac{1}{q}\right)^{\frac{r(r+1)}{2}}\right]_{a}^{b}
- (-1)^{r} \int_a^b \left(\frac{1}{q}\right)^{\frac{r(r+1)}{2}} D_q^{r+1}[f](t)
g^{\sigma^{r+1}}(t)d_q t.
\end{multline*}
We conclude that
\begin{multline*}
\int_a^b f(t) D_q^{r+1}[g](t)d_q t
= \left[f(t) D_q^{r}[g](t) + \sum_{k=1}^{r}(-1)^k D_q^{k}[f](t)
D_q^{r-k}[g \circ \sigma^k](t)\cdot
\prod_{j=1}^{k}\left(\frac{1}{q}\right)^{r+1-j}\right]_{a}^{b}\\
+ (-1)^{r+1} \int_a^b \left(\frac{1}{q}\right)^{\frac{r(r+1)}{2}}
D_q^{r+1}[f](t)  g^{\sigma^{r+1}}(t)d_q t,
\end{multline*}
proving that the result is true for $i=r+1$.
\end{proof}

The following lemma follows easily
(by contradiction and the properties of the $q$-integral).

\begin{lemma}
\label{teorema tecnico}
Suppose that $a \in \mathcal{Q}$ and
$f:[a,+\infty[\rightarrow \mathbb{R}$
is a function such that $f \geq 0$. If
$$
\lim_{T\rightarrow+\infty}\inf_{T^\prime \geq T}
\int_{a}^{T^\prime}f(t)d_q t=0,
$$
then $f=0$ on $[a,+\infty[$.
\end{lemma}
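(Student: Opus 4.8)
The plan is to argue by contradiction, exploiting the fact that a $q$-integral over a finite $q$-interval is literally a finite sum of nonnegative terms. Suppose $f$ is not identically zero on $[a,+\infty[$. Since every point of $[a,+\infty[$ has the form $aq^m$ with $m\in\mathbb{N}_0$, there is an index $m$ with $f(aq^m)>0$. I would set $t_0:=aq^m$ and $c:=a(q-1)q^m f(t_0)$, and note that $c>0$ because $a>0$, $q>1$ and $f(t_0)>0$.

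Next I would unfold the definition of the $q$-integral. For any upper limit $T'=aq^n\in\mathcal{Q}$ one has
$$
\int_a^{T'} f(t)\,d_q t = a(q-1)\sum_{k=0}^{n-1} q^k f(aq^k),
$$
and since $f\geq 0$ every summand is nonnegative. Consequently, whenever $n-1\geq m$, that is, whenever $T'\geq \sigma(t_0)=aq^{m+1}$, the sum contains the single term $a(q-1)q^m f(t_0)=c$, so discarding the remaining (nonnegative) summands yields the lower bound $\int_a^{T'} f(t)\,d_q t\geq c$.

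This bound is the crux: it does not depend on $T'$ as long as $T'\geq\sigma(t_0)$. Hence for every $T\geq\sigma(t_0)$ we get $\inf_{T'\geq T}\int_a^{T'} f(t)\,d_q t\geq c$, and letting $T\to+\infty$ gives
$$
\lim_{T\to+\infty}\inf_{T'\geq T}\int_a^{T'} f(t)\,d_q t \geq c>0,
$$
contradicting the hypothesis that this limit equals $0$. Therefore no such index $m$ exists and $f\equiv 0$ on $[a,+\infty[$.

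The argument is essentially bookkeeping, so there is no serious obstacle; the only point that requires a little care is the index range in the second step. One must check that the term carrying $f(t_0)$ is captured in the partial sum for \emph{all} sufficiently large $T'$, so that the resulting lower bound is genuinely uniform in $T'$ and therefore survives both the infimum over $T'\geq T$ and the outer limit in $T$.
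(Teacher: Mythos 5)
Your proof is correct and takes essentially the route the paper intends: the paper gives no details, stating only that the lemma ``follows easily (by contradiction and the properties of the $q$-integral)'', and your contradiction argument---bounding the partial sums of the $q$-integral below by the single positive term $a(q-1)q^{m}f(aq^{m})$, uniformly in $T^{\prime}\geq\sigma(t_{0})$, so that the bound survives the infimum and the limit---is exactly the kind of elementary verification being alluded to. No gaps.
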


We now present two first-order fundamental lemmas of the
$q$-calculus of variations for infinite horizon variational
problems.

\begin{lemma}
\label{lemma2}
Let $a \in \mathcal{Q}$ and $f:[a,+\infty[\rightarrow \mathbb{R}$. If
$$
\lim_{T\rightarrow+\infty}\inf_{T^\prime \geq
T} \int_{a}^{T^\prime}f(t)D_q[\eta](t)d_q t=0
\quad \mbox{ for all } \ \eta :[a,+\infty[\rightarrow \mathbb{R}
\ \ \mbox{ such \ that}\ \  \eta(a)=0,
$$
then $f(t)=c$ for all $t\in [a,+\infty[$, where $c \in \mathbb{R}$.
\end{lemma}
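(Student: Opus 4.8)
The plan is to reduce the integral hypothesis to a statement about the values of $f$ on the grid $\{aq^k : k \in \mathbb{N}_0\}$ and then to exploit the full arbitrariness of $\eta$ by means of test functions supported at a single point. First I would write out the $q$-integral over a truncation $T' = aq^N$ explicitly. Setting $f_k := f(aq^k)$ and $\eta_k := \eta(aq^k)$ and using $D_q[\eta](aq^k) = (\eta_{k+1}-\eta_k)/((q-1)aq^k)$, the weights $a(q-1)q^k$ of the $q$-integral cancel exactly against the denominator of the Jackson operator, so that
$$
\int_{a}^{aq^N} f(t)D_q[\eta](t)\, d_q t = \sum_{k=0}^{N-1} f_k\bigl(\eta_{k+1}-\eta_k\bigr).
$$
This discrete Abel-summation identity is the computational heart of the argument.

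The key idea is then to choose, for each fixed $s \in \mathbb{N}$, the \emph{bump} function $\eta$ defined by $\eta(aq^s) = 1$ and $\eta(t)=0$ for every other $t \in [a,+\infty[$; since $s\ge 1$ this satisfies $\eta(a)=0$ and is therefore admissible in the hypothesis. For this $\eta$ only the indices $k=s-1$ and $k=s$ contribute to the sum above, yielding the value $f_{s-1}-f_s$ as soon as $N \ge s+1$. Crucially, this value no longer depends on $N$: the truncated integral is \emph{eventually constant} in $T'$.

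Consequently the double limit in the hypothesis collapses. Once $T' \ge aq^{s+1}$ the truncated integral equals the constant $f_{s-1}-f_s$, so that $\inf_{T'\ge T}$ and then $\lim_{T\to+\infty}$ simply return this constant. The hypothesis thus forces $f_{s-1}-f_s = 0$, i.e.\ $f(aq^{s-1}) = f(aq^s)$, for every $s \in \mathbb{N}$. Running over all $s$ shows that $f$ is constant on $[a,+\infty[$, equal to $c := f(a)$, which is exactly the claim.

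The step that needs the most care is the interplay with the $\liminf$ over the infinite horizon. The natural classical (du Bois--Reymond) strategy of defining $c$ as the mean of $f$ over a finite interval and building a global $q$-antiderivative $\eta$ has no direct analogue here, since there is no terminal point over which to average and the improper behaviour of such a global $\eta$ would have to be controlled against the $\inf_{T'\ge T}$. Choosing finitely supported test functions sidesteps this difficulty entirely, precisely because it makes the truncated integral stabilize; the only remaining subtlety is the bookkeeping of the two contributing indices, together with the trivial check that the case $s=1$ still respects $\eta(a)=0$.
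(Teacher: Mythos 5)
Your proof is correct, and it takes a genuinely different route from the paper's. The paper runs a du Bois--Reymond--type argument: for fixed $T^\prime$ it defines $c$ by $\int_a^{T^\prime}(f(\tau)-c)\,d_q\tau=0$, takes $\eta(t)=\int_a^t(f(\tau)-c)\,d_q\tau$ (so that $D_q[\eta]=f-c$ and $\eta(a)=\eta(T^\prime)=0$), rewrites the hypothesis as $\lim_{T\rightarrow+\infty}\inf_{T^\prime\ge T}\int_a^{T^\prime}(f(t)-c)^2\,d_qt=0$, and concludes $f\equiv c$ from the positivity result of Lemma~\ref{teorema tecnico}. You instead unfold the $q$-integral into the telescoped finite sum $\sum_{k=0}^{N-1}f_k(\eta_{k+1}-\eta_k)$ and test against single-point bumps supported at $aq^s$, $s\ge 1$, which gives $f(aq^{s-1})=f(aq^s)$ directly for every $s$ and hence $f\equiv f(a)$. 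Your route is more elementary and fully exploits the discreteness of $\mathcal{Q}$: since the truncated integral is eventually constant in $T^\prime$, the $\inf_{T^\prime\ge T}$ and the subsequent limit collapse trivially, no auxiliary positivity lemma is needed, and you avoid the dependence of the paper's $c$ and $\eta$ on $T^\prime$, which the paper's proof leaves implicit when passing to the $\liminf$. What the paper's approach buys is an argument whose structure mirrors the classical continuous-time lemma and which sets up the pattern reused immediately afterwards in Lemma~\ref{lemma4}; your argument, by contrast, would not transfer to settings where point-mass variations are unavailable, but within the $q$-scale it is cleaner.
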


\begin{proof}
Fix $T, T^\prime \in \mathcal{Q}$ such that $T^\prime \geq T >a$.
Let $c$ be a constant defined by the condition
$$
\int_{a}^{T^\prime}\left(f(\tau)-c\right)d_q \tau=0
$$
and let
$$
\eta(t)=\int_{a}^{t}\left(f(\tau)-c\right)d_q \tau.
$$
Clearly,  $D_q[\eta](t)=f(t)-c$ and
$$
\eta(a)=\int_{a}^{a}\left(f(\tau)-c\right)d_q \tau=0
\quad \mbox{and} \quad
\eta(T^\prime)=\int_{a}^{T^\prime}\left(f(\tau)-c\right)d_q \tau=0.
$$
Observe that
$$
\int_{a}^{T^\prime}\left(f(t)-c\right)D_q[\eta](t)d_q t
= \int_{a}^{T^\prime}\left(f(t)-c\right)^2d_q t
$$
and
$$
\int_{a}^{T^\prime}\left(f(t)-c\right)D_q[\eta](t)d_q t
= \int_{a}^{T^\prime}f(t)D_q[\eta](t)d_q t- c
\int_{a}^{T^\prime}D_q[\eta](t)d_q t=\int_{a}^{T^\prime}f(t)D_q[\eta](t)d_q t.
$$
Hence,
$$
\lim_{T\rightarrow+\infty}\inf_{T^\prime \geq T}
\int_{a}^{T^\prime}f(t)D_q[\eta](t)d_q t
= \lim_{T\rightarrow+\infty}\inf_{T^\prime \geq T}
\int_{a}^{T^\prime}\left(f(t)-c\right)^2d_q t=0,
$$
which shows, by Lemma~\ref{teorema tecnico}, that
$f(t)-c=0$ for all $t \in [a,+\infty[$.
\end{proof}

\begin{lemma}
\label{lemma4}
Let $f,g:[a,+\infty[\rightarrow \mathbb{R}$. If
$$
\lim_{T\rightarrow+\infty}\inf_{T^\prime \geq T}
\int_{a}^{T^\prime}\left(f(t)\eta(q t)
+ g(t) D_q[\eta](t)\right)d_q t=0
$$
for all $\eta:[a,+\infty[\rightarrow \mathbb{R}$
such that $\eta(a)=0$, then
$D_q[g](t)=f(t)$ for all $t\in [a,+\infty[$.
\end{lemma}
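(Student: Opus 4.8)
The plan is to reduce the statement to the first-order fundamental Lemma~\ref{lemma2} by absorbing $f$ into a $q$-antiderivative and then transferring the factor $\eta(qt)$ onto $D_q[\eta]$ via the $q$-integration by parts formula. Concretely, I would introduce
\[
F(t):=\int_a^{t} f(\tau)\,d_q\tau , \qquad t\in[a,+\infty[ ,
\]
so that, by the fundamental theorem of $q$-calculus, $D_q[F]=f$ on $[a,+\infty[$ and $F(a)=0$. The purpose of $F$ is to rewrite the ill-placed term $f(t)\eta(qt)$ as an expression built from $D_q[\eta]$, after which the problem acquires exactly the shape treated in Lemma~\ref{lemma2}.

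Fixing $T'\geq T>a$ and applying the $q$-integration by parts formula to $\int_a^{T'} F(t)\,D_q[\eta](t)\,d_q t$, and using $F(a)=0$ together with $\eta(a)=0$, I obtain
\[
\int_a^{T'} f(t)\,\eta(qt)\,d_q t = F(T')\,\eta(T') - \int_a^{T'} F(t)\,D_q[\eta](t)\,d_q t .
\]
Substituting this into the hypothesis, the integral appearing in the statement becomes
\[
\int_a^{T'}\bigl(f(t)\eta(qt)+g(t)D_q[\eta](t)\bigr)\,d_q t = F(T')\,\eta(T') + \int_a^{T'}\bigl(g(t)-F(t)\bigr)D_q[\eta](t)\,d_q t .
\]
Writing $h:=g-F$, it then suffices to prove that $h$ is constant, for in that case $0=D_q[h]=D_q[g]-f$, which is precisely the desired conclusion.

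To show that $h$ is constant I would mimic the proof of Lemma~\ref{lemma2} rather than invoke it as a black box. For each fixed $T'$, choose the constant $c$ by the requirement $\int_a^{T'}\bigl(h(\tau)-c\bigr)d_q\tau=0$ and set $\eta(t):=\int_a^{t}\bigl(h(\tau)-c\bigr)d_q\tau$. This $\eta$ is admissible, since $\eta(a)=0$, and, crucially, also satisfies $\eta(T')=0$, so the boundary contribution $F(T')\eta(T')$ vanishes; moreover $D_q[\eta]=h-c$. With this choice the displayed integral collapses to $\int_a^{T'} h(t)\bigl(h(t)-c\bigr)d_q t=\int_a^{T'}\bigl(h(t)-c\bigr)^2 d_q t$, the cross term dropping out by the defining property of $c$. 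The hypothesis then forces $\lim_{T\to+\infty}\inf_{T'\geq T}\int_a^{T'}\bigl(h-c\bigr)^2 d_q t=0$, and Lemma~\ref{teorema tecnico} yields $h\equiv c$, whence $D_q[g]=f$ on $[a,+\infty[$.

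The main obstacle is the boundary term $F(T')\eta(T')$ at the \emph{moving} upper endpoint $T'$. Unlike in the finite-horizon case, one cannot send it to zero, and it also obstructs a direct black-box reduction to Lemma~\ref{lemma2}: since $\liminf$ does not distribute over sums, I cannot simply subtract $F(T')\eta(T')$ from the hypothesis and retain the required limit for all admissible $\eta$. The device that resolves this is to restrict, for each $T'$, to a variation $\eta$ vanishing at both $a$ and $T'$ — exactly the construction used in Lemma~\ref{lemma2} — which annihilates the boundary term while keeping $\eta$ admissible. The only delicate point, handled precisely as in Lemma~\ref{lemma2}, is that both $\eta$ and $c$ depend on $T'$, so the reduction must be performed inside the $\liminf$ over $T'$.
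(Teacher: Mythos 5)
Your proposal is correct and follows essentially the same route as the paper: introduce the $q$-antiderivative of $f$, integrate by parts to trade $f(t)\eta(qt)$ for $\bigl(g-F\bigr)D_q[\eta]$, kill the boundary term by restricting to variations vanishing at $T^\prime$, and finish with the $\int (h-c)^2$ argument and Lemma~\ref{teorema tecnico}. The only difference is presentational: the paper invokes Lemma~\ref{lemma2} as a black box after the restriction to $\eta(T^\prime)=0$, whereas you inline its proof --- which is, if anything, slightly more scrupulous, since the test function built there happens to satisfy exactly that extra vanishing condition.
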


\begin{proof}
Fix $T, T^\prime \in \mathcal{Q}$ such that
$T^\prime\geq T>a$ and
define $A(t)=\int_{a}^{t} f(\tau)d_q \tau$. Then
$D_q[A](t)=f(t)$ for all $t \in [a,+\infty[$  and
$$
\int_{a}^{T^\prime} A(t)D_q[\eta](t)d_q t
= \left[ A(t)\eta(t) \right]_{a}^{T^\prime}
- \int_{a}^{T^\prime}D_q[A](t) \eta(q t)d_q t
=A(T^\prime)\eta(T^\prime) - \int_{a}^{T^\prime} f(t)
\eta(q t)d_q t.
$$
Restricting $\eta$ to those such that
$\eta(T^\prime)=0$, we obtain
$$
\lim_{T\rightarrow+\infty}\inf_{T^\prime \geq
T} \int_{a}^{T^\prime} \left(f(t)\eta(q t) + g(t)
D_q[\eta](t)\right)d_q t =
\lim_{T\rightarrow+\infty}\inf_{T^\prime \geq
T} \int_{a}^{T^\prime} \left(-A(t) + g(t) \right)D_q[\eta](t)d_q t=0.
$$
By Lemma~\ref{lemma2} we may conclude that there exists
$c \in \mathbb{R}$ such that
$-A(t) + g(t)=c$ for all $t \in  [a,+\infty[$. Therefore,
$D_q[A](t)=D_q[g](t)$ for all $t \in [a,+\infty[$, proving the
desired result.
\end{proof}

\begin{lemma}[Higher-order fundamental lemma of the $q$-calculus of variations I]
\label{Fund. Lemma 1}
Let $f_0, f_1, \ldots, f_r : [a,+\infty[ \rightarrow \mathbb{R}$. If
$$
\lim_{T\rightarrow+\infty}\inf_{T^\prime \geq T}
\int_{a}^{T^\prime} \left(\sum_{i=0}^{r}f_i(t)
D_q^{i}[\eta \circ \sigma^{r-i}](t) \right)
d_q t=0
$$
for all $\eta: [a,+\infty[ \rightarrow \mathbb{R}$ such that
$\eta(a)=0$, $D_q[\eta](a)=0$, \ldots, $D_q^{r-1}[\eta](a)=0$, then
\begin{equation*}
\sum_{i=0}^{r} (-1)^i
\left(\frac{1}{q}\right)^{\frac{i(i-1)}{2}}D_q^{i}[f_i](t) =0
\quad \forall t \in [a,+\infty[.
\end{equation*}
\end{lemma}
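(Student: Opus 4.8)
The plan is to use the higher-order $q$-integration by parts formula (Lemma~\ref{integration-parts-higher-order}) to transfer every $q$-derivative off the variation $\eta$ and onto the coefficients $f_i$, so that the integrand collapses to the target expression times $\eta^{\sigma^{r}}$. Concretely, I would fix $T,T^\prime\in\mathcal{Q}$ with $T^\prime\geq T>a$, apply Lemma~\ref{integration-parts-higher-order} (with $b=T^\prime$, $f=f_i$, $g=\eta$) to each term $\int_a^{T^\prime}f_i\,D_q^i[\eta\circ\sigma^{r-i}]\,d_qt$ for $i=1,\dots,r$, and leave the $i=0$ term untouched. Writing $\Phi(t):=\sum_{i=0}^{r}(-1)^i q^{-i(i-1)/2}D_q^i[f_i](t)$, the hypothesis then rewrites as
\[
\lim_{T\to+\infty}\inf_{T^\prime\geq T}\left[\int_a^{T^\prime}\Phi(t)\,\eta^{\sigma^{r}}(t)\,d_qt+B(a,T^\prime)\right]=0,
\]
where $B(a,T^\prime)$ collects all the boundary contributions at $a$ and $T^\prime$, and $\Phi$ is exactly the quantity we must prove vanishes.

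Next I would check that every boundary term at $a$ drops out. Reading off Lemma~\ref{integration-parts-higher-order}, each such term is a multiple of $D_q^{m}[\eta\circ\sigma^{s}](a)$ with either $(m,s)=(i-1,r-i)$ (the leading term) or $(m,s)=(i-1-k,r-i+k)$ (the $k$-terms); in every case $m+s=r-1$ and $m\ge 0$. Iterating Lemma~\ref{lemmaderivadacomposta} yields $D_q^{m}[\eta\circ\sigma^{s}](a)=q^{ms}D_q^{m}[\eta](q^{s}a)$, and expanding $D_q^{m}[\eta](q^{s}a)$ through the definition of $D_q$ exhibits it as a linear combination of $D_q^{m}[\eta](a),\dots,D_q^{m+s}[\eta](a)=D_q^{r-1}[\eta](a)$. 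Since these orders all lie in $\{0,\dots,r-1\}$, the admissibility conditions force them to zero; equivalently one applies Lemma~\ref{lemma_funcoes_admissiveis_1} repeatedly to $\eta,\eta\circ\sigma,\dots$. Thus only the value of $B$ at $T^\prime$ survives.

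To conclude I would exploit the freedom in $\eta$. The conditions $D_q^{j}[\eta](a)=0$ for $j=0,\dots,r-1$ pin down $\eta$ only at the first $r$ points $a,qa,\dots,q^{r-1}a$, so $\eta^{\sigma^{r}}$ is unconstrained on $[a,+\infty[$. Fix an arbitrary $t_0=aq^{m_0}\in[a,+\infty[$ and take $\eta$ equal to $1$ at the single point $q^{r}t_0$ and $0$ elsewhere; this is admissible, since $q^{r}t_0>q^{r-1}a$. For every $T^\prime>q^{r}t_0$ the boundary term at $T^\prime$ also vanishes, because those terms only sample $\eta$ at points $\ge T^\prime>q^{r}t_0$. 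The bracket then collapses to $a(q-1)q^{m_0}\,\Phi(t_0)$, and the hypothesis forces $\Phi(t_0)=0$; as $t_0$ is arbitrary, $\Phi\equiv0$. (Alternatively, selecting $\eta$ with $\eta^{\sigma^{r}}=\Phi$ makes the integrand $\Phi^2\ge 0$ and lets one invoke Lemma~\ref{teorema tecnico}, at the price of more effort to annihilate the $T^\prime$-boundary, in the spirit of the proof of Lemma~\ref{lemma2}.)

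The main obstacle is the boundary bookkeeping of the second paragraph: one must verify that the identity $m+s=r-1$ holds uniformly across the leading term and all $k$-terms produced by Lemma~\ref{integration-parts-higher-order}, and recognise that this is precisely the borderline case in which the $r$ available conditions $D_q^{0}[\eta](a)=\dots=D_q^{r-1}[\eta](a)=0$ are just enough — with one fewer condition the cancellation at $a$ would fail.
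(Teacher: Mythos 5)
Your proof is correct, but it follows a genuinely different route from the paper's. The paper proceeds by induction on $r$: the base case is Lemma~\ref{lemma4} (itself resting on the du Bois--Reymond-type Lemma~\ref{lemma2}), and the inductive step peels off only the highest-order term with a single $q$-integration by parts, absorbs it into the modified coefficient $f_r-\left(\frac{1}{q}\right)^{r}D_q[f_{r+1}]$, and applies the induction hypothesis to the shifted variation $\eta\circ\sigma$. You instead apply the full higher-order $q$-integration by parts formula (Lemma~\ref{integration-parts-higher-order}) in one shot, check via the bookkeeping identity $m+s=r-1$ that every boundary term at $a$ is annihilated by the admissibility conditions (your reduction of $D_q^{m}[\eta\circ\sigma^{s}](a)$ to the values $\eta(a),\dots,\eta(q^{r-1}a)$ is sound, since $D_q^{j}[\eta](a)=0$ for $j=0,\dots,r-1$ is equivalent to $\eta$ vanishing at those $r$ points), and then localize with a one-point variation supported at $q^{r}t_0$, for which the bracket is eventually constant in $T^{\prime}$ and equal to $a(q-1)q^{m_0}\Phi(t_0)$, so the $\liminf$ hypothesis forces $\Phi(t_0)=0$ directly. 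What each approach buys: the paper's induction avoids all boundary bookkeeping at $T^{\prime}$, reuses the first-order lemmas, and mirrors the structure later needed for Lemmas~\ref{first-transversality} and~\ref{Fund. Lemma 3}; your argument is shorter and bypasses Lemmas~\ref{lemma2} and~\ref{lemma4} entirely, and it neatly defuses the $\lim\inf$ structure because a compactly supported variation makes the integral eventually constant --- but it relies essentially on the discreteness of $\mathcal{Q}$ (a point has positive $q$-measure), so it would not transfer to a continuous analogue, whereas the paper's scheme would.
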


\begin{proof}
We proceed by mathematical induction.
If $r=1$, the result is true by Lemma~\ref{lemma4}.
Assume that the result is true for some $r > 1$.
We prove that the result is also true for $r+1$.
Suppose that
$$
\lim_{T\rightarrow+\infty}\inf_{T^\prime \geq T}
\int_{a}^{T^\prime}\left(\sum_{i=0}^{r+1}f_i(t)
D_q^{i}[\eta \circ \sigma^{r+1-i}](t)\right)d_q t=0
$$
for all $\eta:[a, +\infty[\rightarrow\mathbb{R}$ such that
$\eta(a)=0$, $D_q[\eta](a)=0$, \ldots, $D_q^{r}[\eta](a)=0$.
We need to prove that
$$
\sum_{i=0}^{r+1} (-1)^i
\left(\frac{1}{q}\right)^{\frac{i(i-1)}{2}}D_q^{i}[f_i](t) =0
\quad \forall t \in [a,+\infty[.
$$
Note that
$$
\int_{a}^{T^\prime}
\left(\sum_{i=0}^{r+1}f_i(t) D_q^{i}[\eta \circ \sigma^{r+1-i}](t)
\right)d_q t = \int_{a}^{T^\prime}
\left(\sum_{i=0}^{r}f_i(t) D_q^{i}[\eta \circ \sigma^{r+1-i}](t)
\right)d_q t \ +  \int_{a}^{T^\prime}  f_{r+1}(t)
D_q[D_q^{r}[\eta]](t) d_q t.
$$
Using the $q$-integration by parts formula in the last integral,
we obtain that
$$
\int_{a}^{T^\prime}  f_{r+1}(t) D_q[D_q^{r}[\eta]](t)
d_q t = \left[f_{r+1}(t)D_q^{r}[\eta](t)
\right]^{T^\prime}_{a} - \int_{a}^{T^\prime}
D_q[f_{r+1}](t)D_q^{r}[\eta](qt)d_q t.
$$
Since $D_q^{r}[\eta](a)=0$ and we can restrict ourselves
to those $\eta$ such that $D_q^{r}[\eta](T^\prime)=0$, then
$$
\int_{a}^{T^\prime}  f_{r+1}(t) D_q[D_q^{r}[\eta]](t)
d_q t  = - \int_{a}^{T^\prime} D_q[f_{r+1}](t)D_q^{r}[\eta](\sigma(t))d_q t.
$$
By Lemma~\ref{lemmaderivadacomposta},
$$
\int_{a}^{T^\prime}  f_{r+1}(t) D_q[D_q^{r}[\eta]](t)
d_q t  = - \int_{a}^{T^\prime} D_q[f_{r+1}](t)\left(
\frac{1}{q}\right)^{r}D_q^{r}[\eta \circ \sigma](t)d_q t.
$$
Hence,
\begin{equation*}
\begin{split}
\int_{a}^{T^\prime} & \left(
\sum_{i=0}^{r+1}f_i(t)
D_q^{i}[\eta \circ \sigma^{r+1-i}](t) \right)d_q t \\
& = \int_{a}^{T^\prime}
\left(\sum_{i=0}^{r}f_i(t)
D_q^{i}[\eta \circ \sigma^{r+1-i}](t)\right)d_q t
- \int_{a}^{T^\prime} D_q[f_{r+1}](t)\left(
\frac{1}{q}\right)^{r}D_q^{r}[\eta \circ \sigma](t)d_q t\\
&= \int_{a}^{T^\prime} \left(\sum_{i=0}^{r-1}f_i(t)
D_q^{i}[\eta^\sigma \circ \sigma^{r-i}](t)
+ \left( f_r(t) - D_q[f_{r+1}](t)\left(
\frac{1}{q}\right)^{r}\right) D_q^{r}[\eta \circ \sigma](t)
\right)d_q t
\end{split}
\end{equation*}
and, therefore,
\begin{multline*}
\lim_{T\rightarrow+\infty}\inf_{T^\prime \geq
T} \int_{a}^{T^\prime} \left(
\sum_{i=0}^{r+1}f_i(t)
D_q^{i}[\eta \circ \sigma^{r+1-i}](t) \right)d_q t \\
=\lim_{T\rightarrow+\infty}\inf_{T^\prime \geq
T} \int_{a}^{T^\prime}
\left(\sum_{i=0}^{r-1}f_i(t) D_q^{i}[\eta^\sigma \circ \sigma^{r-i}](t)
+ \left( f_r(t)-D_q[f_{r+1}](t)\left(
\frac{1}{q}\right)^{r}\right) D_q^{r}[\eta \circ \sigma](t)
\right)d_q t=0.
\end{multline*}
By Lemma~\ref{lemma_funcoes_admissiveis_1},
$\eta^\sigma(a)=0$, $D_q[\eta \circ \sigma](a)= 0$,
\ldots, $D_q^{r-1}[\eta \circ \sigma](a)=0$.
Then, by the induction hypothesis, we conclude that
$$
\sum_{i=0}^{r-1} (-1)^i
\left(\frac{1}{q}\right)^{\frac{i(i-1)}{2}}D_q^{i}[f_i](t) \ +
\ (-1)^r \left(\frac{1}{q}\right)^{\frac{r(r-1)}{2}}
D_q^{r}\left[f_r - \left(\frac{1}{q}\right)^r D_q[f_{r+1}]\right](t)
=0 \quad \forall t \in [a,+\infty[ \, ,
$$
which is equivalent to
$\sum_{i=0}^{r+1} (-1)^i
\left(\frac{1}{q}\right)^{\frac{i(i-1)}{2}}D_q^{i}[f_i](t)
=0$ for all  $t \in [a,+\infty[$.
\end{proof}

\begin{lemma}[Higher-order fundamental lemma of the $q$-calculus of variations II]
\label{first-transversality}
Let $f_0, f_1, \ldots, f_r:[a,+\infty[ \rightarrow \mathbb{R}$. If
$$
\lim_{T\rightarrow+\infty}  \inf_{T^\prime \geq T}
\int_{a}^{T^\prime}  \left(\sum_{i=0}^{r}f_i
(t) D_q^{i}[\eta \circ \sigma^{r-i}](t) \right)d_q t=0
$$
for all $\eta: [a, +\infty[\rightarrow  \mathbb{R}$ such that
$\eta(a)=0$, $D_q[\eta](a)=0$, \ldots, $D_q^{r-1}[\eta](a)=0$, then
$$
\lim_{T\rightarrow+\infty} \inf_{T^\prime \geq T}
\left\{f_r(T^\prime)\cdot D_q^{r-1}[\eta](T^\prime)\right\} =0.
$$
\end{lemma}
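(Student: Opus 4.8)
The plan is to turn the hypothesis into a statement about pure boundary terms. First I would apply the higher-order $q$-integration by parts formula of Lemma~\ref{integration-parts-higher-order} to each summand $f_i(t)\,D_q^i[\eta\circ\sigma^{r-i}](t)$ and sum over $i=0,\ldots,r$, the index $i=0$ producing no boundary contribution. Collecting the bulk integrals, the integrand becomes $\bigl(\sum_{i=0}^r(-1)^i(1/q)^{i(i-1)/2}D_q^i[f_i](t)\bigr)\,\eta^{\sigma^r}(t)$. Since the hypothesis here is exactly the hypothesis of Lemma~\ref{Fund. Lemma 1}, that lemma makes the bracketed factor vanish identically on $[a,+\infty[$; hence the bulk integral is zero and
\[
\int_a^{T'}\sum_{i=0}^r f_i(t)\,D_q^i[\eta\circ\sigma^{r-i}](t)\,d_qt=\Phi(T')-\Phi(a),
\]
where $\Phi$ denotes the boundary expression furnished by Lemma~\ref{integration-parts-higher-order}.

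Next I would check that $\Phi(a)=0$. The conditions $\eta(a)=D_q[\eta](a)=\cdots=D_q^{r-1}[\eta](a)=0$ form a triangular linear system in $\eta(a),\eta(qa),\ldots,\eta(q^{r-1}a)$ with nonvanishing diagonal, so they are equivalent to $\eta(q^ja)=0$ for $j=0,\ldots,r-1$. Iterating Lemma~\ref{lemmaderivadacomposta} yields $D_q^m[\eta\circ\sigma^p](a)=q^{mp}\,D_q^m[\eta](q^pa)$, and $D_q^m[\eta](q^pa)$ involves only $\eta(q^pa),\ldots,\eta(q^{p+m}a)$. A short index count shows that every factor $D_q^{i-1}[\eta\circ\sigma^{r-i}](a)$ and $D_q^{i-1-k}[\eta\circ\sigma^{r-i+k}](a)$ appearing in $\Phi(a)$ only involves nodes $q^ja$ with $0\le j\le r-1$; all of these vanish, so $\Phi(a)=0$ and the hypothesis reduces to $\lim_{T\to+\infty}\inf_{T'\ge T}\Phi(T')=0$ for every admissible $\eta$.

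It then remains to extract the single term $f_r(T')\,D_q^{r-1}[\eta](T')$ from $\Phi(T')$, and this is the step I expect to be the main obstacle. The same composition identity shows that this is the $i=r$ boundary contribution and, crucially, the only one depending on $\eta(T')$ itself: every other term at $T'$ depends on $\eta$ only through the strictly shifted nodes $qT',q^2T',\ldots,q^{r-1}T'$. The plan is to evaluate the reduced identity on admissible variations tailored so that these shifted contributions are annihilated while $\eta(T')$, and hence $D_q^{r-1}[\eta](T')$, stays free; for such variations $\Phi(T')=f_r(T')\,D_q^{r-1}[\eta](T')$ and the conclusion follows. The difficulty is that the shifted nodes attached to a given $T'$ are values of $\eta$ at larger points, so one cannot set them to zero simultaneously for all $T'$ without trivialising $\eta$; making this rigorous requires, for each prescribed admissible variation, constructing a companion admissible variation whose entire boundary functional equals the target term (for instance by solving the induced first-order recurrence in the shift with the vanishing conditions at $a$ as initial data) and then applying the reduced hypothesis to that companion.
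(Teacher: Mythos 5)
Your reduction is exactly the paper's: apply Lemma~\ref{integration-parts-higher-order} termwise, recognise the collected bulk integrand as $\bigl(\sum_{i=0}^{r}(-1)^i(1/q)^{i(i-1)/2}D_q^i[f_i]\bigr)\eta^{\sigma^r}$, kill it with Lemma~\ref{Fund. Lemma 1}, and note that the boundary contribution at $a$ vanishes. Up to that point your argument is correct and matches the paper step for step; your node-counting justification that every $D_q$-expression in $\Phi(a)$ involves $\eta$ only at $a,qa,\ldots,q^{r-1}a$ is actually more explicit than the paper, which simply lists the vanishing of those boundary terms at $a$ among the conditions it imposes. Your observation that $f_r(T')D_q^{r-1}[\eta](T')$ is the unique boundary term at $T'$ containing the node $\eta(T')$ itself is also correct.

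The gap is that you stop where the proof has to finish: the extraction of the single term $f_r(T')D_q^{r-1}[\eta](T')$ is announced as a plan (construct a ``companion'' admissible variation by solving a recurrence) but never carried out, so the proposal does not establish the stated conclusion. For comparison, the paper handles this step by fiat: it \emph{restricts} $\eta$ to variations satisfying $D_q^{k-1}[\eta\circ\sigma^{r-k}](T')=0$ and $D_q^{r-1-k}[\eta\circ\sigma^{k}](T')=0$ for $k=1,\ldots,r-1$, so that $\Phi(T')$ collapses to the desired term, and then reads off the implication. That is precisely the move you flag as suspect, and your objection is substantive for $r\ge 2$: taking $k=r-1$ in the second family gives $\eta(q^{r-1}T')=0$, so imposing the restriction on a cofinal set of $T'$ forces $\eta$ to vanish eventually and makes $D_q^{r-1}[\eta](T')$ identically zero for large $T'$, i.e.\ the conclusion becomes vacuous for the restricted class, while imposing it at a single $T'$ does not control $\inf_{T'\ge T}$. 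So you have correctly located the weak point of the argument, but neither your sketch nor the paper closes it: to turn your proposal into a proof you must actually exhibit, for an arbitrary admissible $\eta$, a variation to which the reduced hypothesis applies and whose boundary functional equals $f_r(T')D_q^{r-1}[\eta](T')$ for all large $T'$, and verify that this yields the limit statement for the original $\eta$.
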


\begin{proof}
Note that
\begin{equation*}
\begin{split}
\int_{a}^{T^\prime} & \left(\sum_{i=0}^{r}
f_i(t) D_q^{i}[\eta \circ \sigma^{r-i}](t) d_q t\right)
= \int_{a}^{T^\prime} f_0(t) \eta^{\sigma^r}(t)d_q t
+ \sum_{i=1}^{r}\left( \int_{a}^{T^\prime}
f_i(t)D_q^{i}[\eta \circ \sigma^{r-i}](t)d_q t\right)\\
&= \int_{a}^{T^\prime} f_0(t) \eta^{\sigma^r}(t)d_q t
+ \sum_{i=1}^{r} \left((-1)^i \int_{a}^{T^\prime}
\left(\frac{1}{q}\right)^{\frac{i(i-1)}{2}} D_q^{i}[f_i](t)
\eta^{\sigma^r}(t)d_q t \right)\\
&\quad +\sum_{i=1}^{r} \left[f_i(t)D_q^{i-1}[\eta \circ \sigma^{r-i}](t)
+\sum_{k=1}^{i-1}(-1)^k D_q^{k}[f_i](t) D_q^{i-1-k}[\eta \circ \sigma^{r-i+k}](t)
\cdot \prod_{j=1}^{k}\left(\frac{1}{q}\right)^{i-j}\right]_{a}^{T^\prime}\\
&= \int_{a}^{T^\prime} \left(f_0(t) + \sum_{i=1}^{r}
(-1)^i \left(\frac{1}{q}\right)^{\frac{i(i-1)}{2}} D_q^{i}[f_i](t)\right)
\cdot \eta^{\sigma^r}(t)d_q t \\
&\quad + \sum_{i=1}^{r} \left[ \left(f_i(t)D_q^{i-1}[\eta \circ \sigma^{r-i}](t)
+\sum_{k=1}^{i-1} (-1)^k D_q^{k}[f_i](t) D_q^{i-1-k}[\eta \circ \sigma^{r-i+k}](t)
\cdot \prod_{j=1}^{k}\left(\frac{1}{q}\right)^{i-j}\right)\right]_{a}^{T^\prime}\\
&= \int_{a}^{T^\prime} \left( \sum_{i=0}^{r} (-1)^i
\left(\frac{1}{q}\right)^{\frac{i(i-1)}{2}} D_q^{i}[f_i](t)\right)
\cdot \eta^{\sigma^r}(t)d_q t \\
& \quad + \sum_{i=1}^{r-1} \left[ \left(f_i(t)
D_q^{i-1}[\eta \circ \sigma^{r-i}](t)
+ \sum_{k=1}^{i-1} (-1)^k D_q^{k}[f_i](t)
D_q^{i-1-k}[\eta \circ \sigma^{r-i+k}](t)
\cdot \prod_{j=1}^{k}\left(\frac{1}{q}\right)^{i-j}
\right)\right]_{a}^{T^\prime}\\
& \quad + \left [ f_r(t)D_q^{r-1}[\eta](t) + \sum_{k=1}^{r-1}
(-1)^k D_q^{k}[f_r](t) D_q^{r-1-k}[\eta \circ \sigma^{k}](t)
\cdot \prod_{j=1}^{k}\left(\frac{1}{q}\right)^{r-j} \right ]_{a}^{T^\prime},
\end{split}
\end{equation*}
where in the second equality we use Lemma~\ref{integration-parts-higher-order}.
Applying now Lemma~\ref{Fund. Lemma 1} we get
\begin{equation*}
\begin{split}
\int_{a}^{T^\prime} &\left(\sum_{i=0}^{r}
f_i(t) D_q^{i}[\eta \circ \sigma^{r-i}](t) d_q t\right)\\
&= \sum_{i=1}^{r-1} \left[ \left(f_i(t)D_q^{i-1}[\eta \circ \sigma^{r-i}](t)
+\sum_{k=1}^{i-1} (-1)^k D_q^{k}[f_i](t) D_q^{i-1-k}[\eta
\circ \sigma^{r-i+k}](t) \cdot \prod_{j=1}^{k}\left(\frac{1}{q}\right)^{i-j}
\right)\right]_{a}^{T^\prime}\\
&\quad + \left[ f_r(t)D_q^{r-1}[\eta](t) +
\sum_{k=1}^{r-1} (-1)^k D_q^{k}[f_r](t) D_q^{r-1-k}[\eta
\circ \sigma^{k}](t) \cdot \prod_{j=1}^{k}\left(\frac{1}{q}\right)^{r-j}\right]_{a}^{T^\prime}.
\end{split}
\end{equation*}
Therefore, restricting the variations $\eta$ to those such that
$$
D_q^{k-1}[\eta \circ \sigma^{r-k}](T^\prime)
=D_q^{k-1}[\eta \circ \sigma^{r-k}](a)=0,
\quad \forall k=1,2,\ldots, r-1,
$$
$$
D_q^{r-1-k}[\eta \circ \sigma^{k}](T^\prime)
=D_q^{r-1-k}[\eta \circ \sigma^{k}](a)=0,
\quad \forall k=1,2,\ldots, r-1,
$$
we get
$$
\lim_{T\rightarrow+\infty}\inf_{T^\prime \geq
T} \int_{a}^{T^\prime}
\left(\sum_{i=0}^{r}f_i(t) D_q^{i}[\eta \circ \sigma^{r-i}](t)
\right)d_q t=0
\Rightarrow
\lim_{T\rightarrow+\infty}\inf_{T^\prime \geq
T} \left\{ f_r(T^\prime) D_q^{r-1}[\eta](T^\prime) \right\}=0
$$
proving the desired result.
\end{proof}

\begin{lemma}[Higher-order fundamental lemma of the $q$-calculus of variations III]
\label{Fund. Lemma 3}
Let $f_0, f_1, \ldots, f_r:[a,+\infty[ \rightarrow \mathbb{R}$. If
$$  \lim_{T\rightarrow+\infty}  \inf_{T^\prime \geq
T} \displaystyle \int_{a}^{T^\prime}  \left(\sum_{i=0}^{r}f_i
(t) D_q^{i}[\eta \circ \sigma^{r-i}](t) \right)d_q t=0$$
for all $\eta: [a, +\infty[\rightarrow \mathbb{R}$ such that
$\eta(a)=0$, $D_q[\eta](a)=0$, \ldots, $D_q^{r-1}[\eta](a)=0$, then
$$
\lim_{T\rightarrow+\infty} \inf_{T^\prime \geq T}
\left\{\left(f_{r-(k-1)} (T^\prime) + \sum_{i=1}^{k-1}
(-1)^{i} D_q^{i}[f_{r-(k-1)+i}](T^\prime)\cdot
\prod_{j=1}^{i}\left(\frac{1}{q}\right)^{r-(k-1)+(j-1)}\right)
\cdot D_q^{r-k}[\eta\circ \sigma^{k-1}](T^\prime)\right\} =0
$$
for $k=1,2,\ldots,r$.
\end{lemma}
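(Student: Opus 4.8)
\emph{Plan.} The assertion is the common generalisation of Lemma~\ref{first-transversality}, recovered at $k=1$ (the empty sum $\sum_{i=1}^{0}$ is zero and $D_q^{r-1}[\eta\circ\sigma^{0}]=D_q^{r-1}[\eta]$). I would therefore start exactly as in the proof of Lemma~\ref{first-transversality}: apply the higher-order $q$-integration by parts formula (Lemma~\ref{integration-parts-higher-order}) to each summand $\int_a^{T^\prime}f_i(t)D_q^{i}[\eta\circ\sigma^{r-i}](t)\,d_qt$, and then invoke Lemma~\ref{Fund. Lemma 1} to cancel the interior $q$-integral. This collapses the functional to the pure boundary expression already displayed in that proof,
$$
\int_{a}^{T^\prime}\Bigl(\sum_{i=0}^{r}f_i(t)D_q^{i}[\eta\circ\sigma^{r-i}](t)\Bigr)d_qt
=\bigl[\;\text{boundary terms}\;\bigr]_{a}^{T^\prime},
$$
so that the only new work is to read off the coefficient multiplying $D_q^{r-k}[\eta\circ\sigma^{k-1}]$ for each $k$.

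The heart of the proof is a reindexing of this boundary sum. Setting $\ell:=r-(k-1)$, so that the target factor is $D_q^{\ell-1}[\eta\circ\sigma^{r-\ell}]$, I would collect every boundary term carrying this factor. Exactly three families contribute: the leading term of index $i=\ell$ (with coefficient $f_\ell$); the inner-sum terms of index $i=\ell+p$ for $p=1,\dots,r-1-\ell$; and, when $\ell\le r-1$, the single term from the $f_r$-block with $p=r-\ell$. Matching derivative orders, $D_q^{i-1-p}[\eta\circ\sigma^{r-i+p}]=D_q^{\ell-1}[\eta\circ\sigma^{r-\ell}]$ forces $i-p=\ell$, confirming that all three families share the claimed factor; summing their coefficients yields
$$
f_\ell+\sum_{i=1}^{r-\ell}(-1)^{i}D_q^{i}[f_{\ell+i}]\prod_{j=1}^{i}\Bigl(\frac{1}{q}\Bigr)^{\ell+j-1},
$$
which is precisely the coefficient in the statement. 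The sole computation is that the $q$-powers agree: for the inner terms $\prod_{j=1}^{p}(1/q)^{\ell+p-j}=\prod_{j=1}^{p}(1/q)^{\ell+j-1}$, and for the $f_r$-term $\prod_{j=1}^{r-\ell}(1/q)^{r-j}=\prod_{j=1}^{r-\ell}(1/q)^{\ell+j-1}$, both of which reduce to the identity $\sum_{j=1}^{m}(r-j)=\sum_{j=1}^{m}(\ell+j-1)$ with $m=r-\ell$ (equivalently $m(r-\ell)=m^{2}$).

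Next I would discard the lower endpoint. The admissibility conditions $D_q^{j}[\eta](a)=0$ for $j=0,\dots,r-1$ force $\eta(a)=\eta(qa)=\dots=\eta(q^{r-1}a)=0$, since each $D_q^{j}[\eta](a)$ is a triangular combination of $\eta(a),\dots,\eta(q^{j}a)$ with nonzero leading coefficient. As $D_q^{\ell-1}[\eta\circ\sigma^{r-\ell}](a)$ involves only the values $\eta(q^{r-\ell}a),\dots,\eta(q^{r-1}a)$ — all among those just forced to vanish — every contribution at $a$ is zero, leaving a sum over $\ell=1,\dots,r$ of terms evaluated at $T^\prime$.

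Finally, to isolate a single $\ell$, I would restrict the admissible variations to those with $D_q^{\ell'-1}[\eta\circ\sigma^{r-\ell'}](T^\prime)=0$ for all $\ell'\neq\ell$. Such $\eta$ exist because the linear map sending the $r$ free values $\eta(T^\prime),\dots,\eta(q^{r-1}T^\prime)$ to the vector $\bigl(D_q^{\ell'-1}[\eta\circ\sigma^{r-\ell'}](T^\prime)\bigr)_{\ell'=1}^{r}$ is triangular with nonzero diagonal — the factor indexed by $\ell'$ involves $\eta(q^{r-\ell'}T^\prime),\dots,\eta(q^{r-1}T^\prime)$ and carries $\eta(q^{r-\ell'}T^\prime)$ with a nonzero coefficient — hence invertible; and for $T^\prime$ large these values are unconstrained by the conditions near $a$, the grid points $q^{m}a$ and $q^{m}T^\prime$ being disjoint. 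With only the $\ell$-term surviving, the hypothesis immediately gives the vanishing of the corresponding limit, which on rewriting $\ell=r-(k-1)$ is the assertion for $k=1,\dots,r$. I expect the main obstacle to be the bookkeeping of the second step — correctly matching the three families and reconciling their $q$-powers — the triangularity argument of the last step being routine.
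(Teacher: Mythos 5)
Your proposal is correct, but it follows a genuinely different route from the paper's. The paper proves Lemma~\ref{Fund. Lemma 3} by induction on $r$: a single $q$-integration by parts absorbs $f_{r+1}$ into $f_r$ (exactly the reduction used in the proof of Lemma~\ref{Fund. Lemma 1}), so that the $k$th transversality condition at order $r+1$ is read off from the $(k-1)$th condition at order $r$ applied to the shifted variation $\eta^{\sigma}$ and the modified coefficient $f_r-(1/q)^rD_q[f_{r+1}]$, the case $k=1$ being supplied separately by Lemma~\ref{first-transversality}. You instead prove all $r$ conditions at once and non-inductively: one application of Lemma~\ref{integration-parts-higher-order} together with Lemma~\ref{Fund. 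Lemma 1} collapses the functional to its boundary part, and the matching condition $i-p=\ell$ (from equating $D_q^{i-1-p}[\eta\circ\sigma^{r-i+p}]$ with $D_q^{\ell-1}[\eta\circ\sigma^{r-\ell}]$) collects the coefficient $f_\ell+\sum_{i=1}^{r-\ell}(-1)^iD_q^{i}[f_{\ell+i}]\prod_{j=1}^{i}(1/q)^{\ell+j-1}$, which is the one in the statement after substituting $\ell=r-(k-1)$; your reconciliation of the $q$-powers (the product $\prod_{j=1}^{p}(1/q)^{\ell+p-j}$ is just $\prod_{j=1}^{p}(1/q)^{\ell+j-1}$ read backwards) checks out, and your observation that the contributions at $t=a$ vanish because $D_q^{j}[\eta](a)=0$, $j=0,\dots,r-1$, forces $\eta(q^{m}a)=0$ for $m=0,\dots,r-1$ is sound. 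What your route buys is that Lemma~\ref{first-transversality} becomes the special case $k=1$ rather than a separate ingredient, and the combinatorics is made explicit once instead of being rerun inside an induction; what the paper's route buys is that it never has to reindex the double boundary sum. One caveat: your last step prescribes boundary data of $\eta$ at $T^{\prime}$ while $T^{\prime}$ ranges inside the infimum, so the selected variation implicitly depends on $T^{\prime}$; this is exactly the same level of rigor as the paper's own ``restricting the variations'' step in the proof of Lemma~\ref{first-transversality}, so it is not a gap relative to the paper.
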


\begin{proof}
We prove the lemma by mathematical induction.
For $r=1$, using the $q$-integration by parts formula
and Lemma~\ref{Fund. Lemma 1}, we obtain
$\lim_{T\rightarrow+\infty} \inf_{T^\prime \geq T}
f_1(T^\prime) \eta(T^\prime)=0$,
showing that the result is true for $r=1$.
Assuming that the result holds for degree $r>1$,
we will prove it for $r+1$.
Suppose that
$$
\lim_{T\rightarrow+\infty}\inf_{T^\prime \geq
T} \int_{a}^{T^\prime}
\left(\sum_{i=0}^{r+1}f_i(t) D_q^{i}[\eta \circ \sigma^{r+1-i}](t)
\right)d_q t=0
$$
for all $\eta:[a, +\infty[\rightarrow\mathbb{R}$ such that
$\eta(a)=0$, $D_q[\eta](a)=0$, \ldots, $D_q^{r}[\eta](a)=0$.
We need to prove
\begin{multline}
\label{tese_de_inducao}
\lim_{T\rightarrow+\infty} \inf_{T^\prime \geq T}
\Biggl\{\left (f_{r+1-(k-1)} (T^\prime)
+ \sum_{i=1}^{k-1} (-1)^{i} D_q^{i}[f_{r+1-(k-1)+i}](T^\prime)
\cdot \displaystyle\prod_{j=1}^{i}\left(\frac{1}{q}\right)^{r+1 -(k-1)+(j-1)} \right)\\
\cdot D_q^{r+1-k}[\eta \circ \sigma^{k-1}](T^\prime)\Biggr\} =0
\end{multline}
for $k=1,2,\ldots, r,r+1$.
Fix some $k=2,\ldots, r,r+1$. The main ideia of the proof is that the
$k$-transversality condition for the variational problem of order $r+1$
is obtained from the $k-1$ transversality condition for the variational problem of order $r$.
Using the same techniques as in Lemma~\ref{Fund. Lemma 1}, we prove that
\begin{multline*}
\lim_{T\rightarrow+\infty}\inf_{T^\prime \geq
T} \int_{a}^{T^\prime}
\left(\sum_{i=0}^{r+1}f_i(t) D_q^{i}[\eta \circ \sigma^{r+1-i}](t)
\right)d_q t=0\\
\Rightarrow \lim_{T\rightarrow+\infty}\inf_{T^\prime \geq
T} \left\{\int_{a}^{T^\prime}
\left(\sum_{i=0}^{r-1}f_i(t) D_q^{i}[\eta^\sigma \circ \sigma^{r-i}](t)
+ \left( f_r(t) - D_q[f_{r+1}](t)\left(
\frac{1}{q}\right)^{r}\right) D_q^{r}[\eta \circ \sigma](t)
\right)d_q t\right\}=0.
\end{multline*}
Since, by Lemma \ref{lemma_funcoes_admissiveis_1},
$\eta^\sigma(a)=0$, $D_q[\eta \circ \sigma](a)= 0$,
\ldots, $D_q^{r-1}[\eta \circ \sigma](a)=0$, then,
by the induction hypothesis for $k-1$, we conclude that
$$
\begin{array}{lcl}
& & \displaystyle \lim_{T\rightarrow+\infty} \inf_{T^\prime \geq T}
\Biggl\{\Biggl(f_{r-(k-2)} (T^\prime) + \sum_{i=1}^{k-3}
(-1)^{i} D_q^{i}[f_{r-(k-2)+i}](T^\prime)\cdot
\prod_{j=1}^{i}\Big(\frac{1}{q}\Big)^{r-(k-2)+(j-1)}\\
&&
\quad  \quad \quad \quad + (-1)^{k-2}D_q^{k-2}[f_r](T^\prime)
\cdot \displaystyle \prod_{j=1}^{k-2}\Big(\frac{1}{q}\Big)^{r-(k-2)+(j-1)}\\
&&
\quad  \quad \quad \quad  + (-1)^{k-1}D_q^{k-1}[f_{r+1}](T^\prime)
\cdot\displaystyle \prod_{j=1}^{k-2}\Big(\frac{1}{q}\Big)^{r-(k-2)
+(j-1)}\Big(\frac{1}{q}\Big)^{r}\Biggr)
\cdot D_q^{r-(k-1)}[\eta^\sigma\circ \sigma^{k-2}](T^\prime)\Biggr\} =0,
\end{array}
$$
which is equivalent to
$$
\lim_{T\rightarrow+\infty} \inf_{T^\prime \geq T}
\left\{\left(f_{r-(k-2)} (T^\prime)
+ \sum_{i=1}^{k-1} (-1)^{i} D_q^{i}[f_{r-(k-2)+i}](T^\prime)
\cdot \prod_{j=1}^{i}\left(\frac{1}{q}\right)^{r-(k-2)+(j-1)}\right)
\cdot D_q^{r-(k-1)}[\eta^{\sigma^{k-1}}](T^\prime)\right\} =0
$$
and proves equation \eqref{tese_de_inducao} for $k=2,3,\ldots, r, r+1$.
It remains to prove \eqref{tese_de_inducao} for $k=1$.
This condition follows from  Lemma~\ref{first-transversality}.
\end{proof}

% ------------------------------------------

\subsection{Euler--Lagrange equation and transversality conditions}
\label{E-L and Transversality}

We are now in conditions to prove a first-order
necessary optimality condition for the higher-order
infinite horizon $q$-variational problem.
In what follows $\partial_i L$ denotes the partial
derivative of $L$ with respect to its $i$th argument.
For simplicity of expressions, we introduce the operator
$\langle \cdot \rangle$ defined by
$$
\langle x\rangle(t) :=
\left(t,(x\circ \sigma^r)(t), D_q[x\circ \sigma^{r-1}](t),
\ldots, D_q^{r-1}[x\circ \sigma](t), D^r_q[x](t)\right).
$$

\begin{theorem}
\label{main result}
Suppose that the optimal path to problem \eqref{problem} exists and
is given by $x_{\ast}$. Let $\eta:[a,+\infty[ \rightarrow \mathbb{R}$ be such that
$\eta(a)=0, D_q[\eta](a)=0, \ldots, D_q^{r-1}[\eta](a)=0$. Define
\begin{equation*}
\begin{split}
A(\varepsilon, T^\prime)&:= \int_{a}^{T^\prime}
\frac{L\langle x_{\ast}+\epsilon \eta\rangle(t)
- L\langle x_{\ast} \rangle(t)}{\varepsilon} \, d_q t,\\
V(\varepsilon, T) &:= \inf_{T^\prime \geq
T}\int_{a}^{T^\prime} \left(
L\langle x_{\ast}+\epsilon \eta\rangle(t)
- L\langle x_{\ast} \rangle(t)\right) \, d_q t,\\
V(\varepsilon) &:= \lim_{T\rightarrow+\infty} V(\varepsilon, T).
\end{split}
\end{equation*}
Suppose that
\begin{enumerate}
\item $\lim_{\varepsilon \rightarrow 0}
\frac{V(\varepsilon, T) }{\varepsilon}$ exists for all $T$;

\item $\lim_{T\rightarrow+\infty}\frac{V(\varepsilon,
T)}{\varepsilon}$ exists uniformly for $\varepsilon$;

\item For every $T^\prime > a$, $T > a$,
and $\varepsilon\in \mathbb{R}\setminus\{0\}$,
there is a sequence
$\left(A(\varepsilon, T^\prime_n)\right)_{n \in \mathbb{N}}$
such that
$\lim_{n \rightarrow +\infty} A(\varepsilon, T^\prime_n)
= \inf_{T^\prime \geq T} A(\varepsilon, T^\prime)$
uniformly for $\varepsilon$.
\end{enumerate}
Then $x_{\ast}$ satisfies the Euler--Lagrange equation
\begin{equation}
\label{E-L equation}
\sum_{i=0}^{r} (-1)^i
\left(\frac{1}{q}\right)^{\frac{i(i-1)}{2}}
D_q^{i}\left[\partial_{i+2} L\langle x\rangle\right](t)=0
\end{equation}
for all $t \in [a,+\infty[$,
and the $r$ transversality conditions
\begin{equation}
\label{tranversality}
\displaystyle \lim_{T\rightarrow+\infty} \inf_{T^\prime \geq T}
\left\{\left( \partial_{r+2-(k-1)} L\langle x\rangle(T^\prime)
+ \sum_{i=1}^{k-1} (-1)^{i} D_q^{i}\left[\partial_{r+2-(k-1)+i}
L\langle x\rangle\right](T^\prime) \cdot \Psi_i\right)
\cdot D_q^{r-k}[x \circ \sigma^{k-1}](T^\prime)\right\} =0,
\end{equation}
$k=1,2,\ldots,r$, where
$\Psi_i= \prod_{j=1}^{i}\left(\frac{1}{q}\right)^{r-(k-1)+(j-1)}$.
\end{theorem}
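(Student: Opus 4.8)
The plan is to exploit the weak maximality of $x_\ast$ through a one-parameter family of admissible competitors and then reduce the resulting first-order condition to the hypothesis shared by the fundamental lemmas of \S\ref{Fundamental Lemmas}. Since $\eta$ satisfies $\eta(a)=0,\,D_q[\eta](a)=0,\,\ldots,\,D_q^{r-1}[\eta](a)=0$, the path $x_\ast+\varepsilon\eta$ is admissible for every $\varepsilon\in\mathbb{R}$. Applying the definition of weak maximality (Definition~\ref{def:weakMax}) to $x=x_\ast+\varepsilon\eta$ gives $V(\varepsilon)\le 0$ for all $\varepsilon$, while $V(0)=0$ since the integrand vanishes identically at $\varepsilon=0$; hence $\varepsilon=0$ is a global maximizer of $V$, and the whole argument amounts to writing down the vanishing of the first variation in a form that survives all the limit interchanges.

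First I would pass to the limit in $\varepsilon$ and $T$. Writing $\frac{V(\varepsilon)}{\varepsilon}=\lim_{T\to+\infty}\frac{V(\varepsilon,T)}{\varepsilon}$ and applying Theorem~\ref{Serge:Lang} to $f(T,\varepsilon):=\frac{V(\varepsilon,T)}{\varepsilon}$ with $w=+\infty$ and $v=0$ --- hypothesis~(1) supplying $\lim_{\varepsilon\to 0}f(T,\varepsilon)$ for each $T$ and hypothesis~(2) supplying $\lim_{T\to+\infty}f(T,\varepsilon)$ uniformly in $\varepsilon$ --- the two iterated limits coincide:
\begin{equation*}
\lim_{\varepsilon\to 0}\frac{V(\varepsilon)}{\varepsilon}
=\lim_{\varepsilon\to 0}\lim_{T\to+\infty}\frac{V(\varepsilon,T)}{\varepsilon}
=\lim_{T\to+\infty}\lim_{\varepsilon\to 0}\frac{V(\varepsilon,T)}{\varepsilon}.
\end{equation*}
Because $V(\varepsilon)\le 0=V(0)$, the quotient $\frac{V(\varepsilon)}{\varepsilon}$ is $\le 0$ for $\varepsilon>0$ and $\ge 0$ for $\varepsilon<0$; as the two-sided limit exists, it must equal $0$, so that $\lim_{T\to+\infty}\lim_{\varepsilon\to 0}\frac{V(\varepsilon,T)}{\varepsilon}=0$.

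Next I would evaluate the inner limit. For $\varepsilon>0$ one has $\frac{V(\varepsilon,T)}{\varepsilon}=\inf_{T'\ge T}A(\varepsilon,T')$, so using hypothesis~(3) together with a second application of Theorem~\ref{Serge:Lang} --- now exchanging $\varepsilon\to 0$ with the sequential limit that realizes the infimum --- I would swap the limit and the infimum. Since each $q$-integral over $[a,T']$ is a \emph{finite} sum, differentiation under the integral sign is elementary and, by the chain rule together with the linearity of $D_q$,
\begin{equation*}
\lim_{\varepsilon\to 0}A(\varepsilon,T')
=\int_{a}^{T'}\sum_{i=0}^{r}\partial_{i+2}L\langle x_\ast\rangle(t)\,
D_q^{i}[\eta\circ\sigma^{r-i}](t)\,d_q t .
\end{equation*}
Writing $f_i(t):=\partial_{i+2}L\langle x_\ast\rangle(t)$, the displays combine to yield
\begin{equation*}
\lim_{T\to+\infty}\inf_{T'\ge T}\int_{a}^{T'}
\Biggl(\sum_{i=0}^{r}f_i(t)\,D_q^{i}[\eta\circ\sigma^{r-i}](t)\Biggr)d_q t=0 .
\end{equation*}

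This last identity is precisely the hypothesis common to Lemmas~\ref{Fund. Lemma 1} and \ref{Fund. Lemma 3}. Applying Lemma~\ref{Fund. Lemma 1} with these $f_i$ delivers the Euler--Lagrange equation \eqref{E-L equation}, and applying Lemma~\ref{Fund. Lemma 3} delivers the $r$ transversality conditions \eqref{tranversality}, after the substitution $f_i=\partial_{i+2}L\langle x_\ast\rangle$. The hard part is the middle step: the argument hinges entirely on legitimizing the triple interchange of the $\varepsilon$-limit, the $T$-limit and the infimum over $T'$, which is exactly why the three uniformity hypotheses (1)--(3) and the Moore--Osgood-type result of Theorem~\ref{Serge:Lang} are imposed; none of these exchanges is automatic on an unbounded $q$-domain.
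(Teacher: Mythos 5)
Your argument follows the paper's proof almost step for step: weak maximality gives $V(\varepsilon)\le 0=V(0)$, the hypotheses (1)--(3) together with Theorem~\ref{Serge:Lang} legitimize the interchange of the $\varepsilon$-limit, the $T$-limit and the infimum over $T'$, the $\varepsilon$-limit passes inside the finite $q$-sum to produce the first variation, and Lemma~\ref{Fund. Lemma 1} then yields the Euler--Lagrange equation \eqref{E-L equation}. Your sign argument for why the two-sided limit of $V(\varepsilon)/\varepsilon$ must vanish is if anything slightly more explicit than the paper's appeal to $0$ being an extremal of $V$. Up to that point the proposal is correct and essentially identical in approach.

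There is, however, a genuine gap in the final step. Lemma~\ref{Fund. Lemma 3} does not deliver \eqref{tranversality} directly: its conclusion involves the factor $D_q^{r-k}[\eta\circ\sigma^{k-1}](T^\prime)$, i.e.\ the \emph{variation}, whereas the transversality conditions \eqref{tranversality} involve $D_q^{r-k}[x_\ast\circ\sigma^{k-1}](T^\prime)$, i.e.\ the \emph{optimal path} itself. Substituting $f_i=\partial_{i+2}L\langle x_\ast\rangle$ only fixes the coefficient in front; it does not convert the $\eta$-factor into an $x_\ast$-factor, and since the conclusion of Lemma~\ref{Fund. Lemma 3} holds for \emph{every} admissible $\eta$, one must exhibit a specific admissible $\eta$ whose tail behaviour reproduces that of $x_\ast$. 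The paper does this by taking $\eta(t)=\alpha(t)x_\ast(t)$ with $\alpha(a)=D_q[\alpha](a)=\cdots=D_q^{r-1}[\alpha](a)=0$ and $\alpha(t)=\beta\neq 0$ for all $t>T_0$; then for large $T^\prime$ one has $D_q^{r-k}[\eta\circ\sigma^{k-1}](T^\prime)=\beta\,D_q^{r-k}[x_\ast\circ\sigma^{k-1}](T^\prime)$, and dividing by $\beta$ gives \eqref{tranversality}. Without some such construction your proof establishes only the $\eta$-version of the transversality conditions, not the statement of the theorem.
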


\begin{proof}
Using the notion of weak maximality, if $x_{\ast}$ is optimal, then
$V(\varepsilon) \leq 0$ for every $\varepsilon \in \mathbb{R}$.
Since $V(0)=0$, then 0 is an extremal of $V$.
We prove that $V$ is differentiable at $t=0$,
hence $V^\prime(0)=0$. Note that
\begin{equation*}
\begin{split}
V^\prime(0)
&= \lim_{\varepsilon \rightarrow 0} \frac{V(\varepsilon)}{\varepsilon}
= \lim_{\varepsilon \rightarrow 0}
\lim_{T\rightarrow+\infty}\frac{V(\varepsilon, T) }{\varepsilon}\\
&= \lim_{T\rightarrow+\infty}
\lim_{\varepsilon \rightarrow 0} \frac{V(\varepsilon, T) }{\varepsilon}
\quad (\mbox{by hypothesis \emph{1} and \emph{2} and Theorem~\ref{Serge:Lang})}\\
&= \lim_{T\rightarrow+\infty}
\lim_{\varepsilon \rightarrow 0} \inf_{T^\prime \geq T} A(\varepsilon, T^\prime)\\
&= \lim_{T\rightarrow+\infty}
\lim_{\varepsilon \rightarrow 0} \lim_{n \rightarrow +\infty} A(\varepsilon, T^\prime_n)
\quad (\mbox{by hypothesis \emph{3}) }\\
&= \lim_{T\rightarrow+\infty}
\lim_{n \rightarrow +\infty} \lim_{\varepsilon \rightarrow 0} A(\varepsilon, T^\prime_n)
\quad (\mbox{by hypothesis \emph{3} and Theorem \ref{Serge:Lang}) }\\
&= \lim_{T\rightarrow+\infty}
\inf_{T^\prime \geq T}  \lim_{\varepsilon \rightarrow 0} A(\varepsilon, T^\prime)
\quad (\mbox{by hypothesis \emph{3})}\\
&= \lim_{T\rightarrow+\infty}
\inf_{T^\prime \geq T}  \lim_{\varepsilon \rightarrow 0}
\int_{a}^{T^\prime} \frac{L\langle x_{\ast}+\epsilon \eta\rangle(t)
- L\langle x_{\ast} \rangle(t)}{\varepsilon} \, d_q t\\
&= \lim_{T\rightarrow+\infty}  \inf_{T^\prime \geq T}
\int_{a}^{T^\prime} \lim_{\varepsilon \rightarrow 0}
\frac{L\langle x_{\ast}+\epsilon \eta\rangle(t)
- L\langle x_{\ast} \rangle(t)}{\varepsilon} \, d_q t\\
&= \lim_{T\rightarrow+\infty}  \inf_{T^\prime \geq T}
\int_{a}^{T^\prime}  \left(\sum_{i=0}^{r}\partial_{i+2}
L\langle x_{\ast} \rangle(t)
\cdot D_q^{i}[\eta \circ \sigma^{r-i}](t)\right) \, d_q t
\end{split}
\end{equation*}
and hence
$$
\lim_{T\rightarrow+\infty}  \inf_{T^\prime \geq
T} \displaystyle \int_{a}^{T^\prime}  \left(\sum_{i=0}^{r}\partial_{i+2}
L\langle x_{\ast} \rangle(t)
\cdot D_q^{i}[\eta \circ \sigma^{r-i}](t) \right) \, d_q t=0.
$$
Using Lemma~\ref{Fund. Lemma 1} we conclude that
\begin{equation*}
\sum_{i=0}^{r} (-1)^i
\left(\frac{1}{q}\right)^{\frac{i(i-1)}{2}}D_q^{i}\left[\partial_{i+2}
L\langle x_{\ast}\rangle\right](t)=0
\end{equation*}
for all $t \in [a,+\infty[$, proving that $x_{\ast}$ satisfy the
Euler--Lagrange equation \eqref{E-L equation}.
By Lemma~\ref{Fund. Lemma 3}, for $k=1,2,\ldots, r$,
\begin{equation}
\label{tranversality p}
\displaystyle \lim_{T\rightarrow+\infty} \inf_{T^\prime \geq T}
\left\{\left(\partial_{r+2-(k-1)} L\langle x_{\ast}\rangle(T^\prime)
+ \sum_{i=1}^{k-1} (-1)^{i} D_q^{i}\left[\partial_{r+2-(k-1)+i} L\langle
x_{\ast}\rangle\right](T^\prime) \cdot \Psi_i \right)
\cdot D_q^{r-k}[\eta\circ \sigma^{k-1}](T^\prime)\right\} =0,
\end{equation}
where $\Psi_i= \prod_{j=1}^{i}\left(\frac{1}{q}\right)^{r-(k-1)+(j-1)}$.
Consider $\eta$ defined by
$\eta(t)=\alpha(t) x_{\ast}(t)$, $t \in [a, +\infty[$,
where $\alpha: [a, +\infty[ \rightarrow \mathbb{R}$
satisfy $\alpha (a)=0, D_q[\alpha](a)=0,
\ldots, D_q^{r-1}[\alpha](a)=0$, and there exists
$T_0\in \mathcal{Q}$ such that $\alpha(t)=\beta \in
\mathbb{R}\setminus\{0\}$ for all $t> T_0$.
Note that
$\eta(a)=0, D_q[\eta](a)=0, \ldots, D_q^{r-1}[\eta](a)=0$.
Substituting $\eta$ in equation \eqref{tranversality p} we conclude that
$$
\lim_{T\rightarrow+\infty} \inf_{T^\prime \geq T}
\left\{\left(\partial_{r+2-(k-1)} L\langle x_{\ast}\rangle(T^\prime)
+ \sum_{i=1}^{k-1} (-1)^{i} D_q^{i}\left[\partial_{r+2-(k-1)+i}
L\langle x_{\ast}\rangle\right](T^\prime)
\cdot \Psi_i\right)\cdot D_q^{r-k}[x_{\ast}
\circ \sigma^{k-1}](T^\prime)\right\} =0,
$$
proving that $x_{\ast}$ satisfy the transversality
condition \eqref{tranversality} for all $k=1,2,\ldots,r$.
\end{proof}

\begin{remark}
For the simplest case $r=1$ we obtain from Theorem~\ref{main result}
the Euler--Lagrange equation
$$
D_q\left[s \rightarrow \partial _3 L\left(s, x(qs), D_q[x](s)\right)\right](t)
= \partial_2 L\left(t, x(qt), D_q[x](t)\right)
$$
and the transversality condition
$\lim_{T\rightarrow+\infty} \inf_{T^\prime \geq T} \left\{\partial_3
L\left(T^\prime, x(qT^\prime), D_q[x](T^\prime)\right)
\cdot x(T^\prime)\right\}=0$.
However, when $r > 1$, Theorem~\ref{main result}
gives more than one transversality condition.
Indeed, for an infinite horizon
variational problem of order $r$ one has
$r$ transversality conditions and, for each $k=1,2,\ldots, r$,
the $k$th transversality condition has exactly $k$ terms.
This improves the results of \cite{Nitta:et:all:2009}.
\end{remark}

% ------------------------------------------

\small

% ------------------------------------------

\section*{Acknowledgements}

Work supported by {\it FEDER} funds through
{\it COMPETE} (Operational Programme Factors of Competitiveness)
and by Portuguese funds through the
{\it Center for Research and Development
in Mathematics and Applications} (University of Aveiro)
and {\it FCT} (The Portuguese Foundation for Science and Technology),
within project PEst-C/MAT/UI4106/2011
with COMPETE number FCOMP-01-0124-FEDER-022690.

% ------------------------------------------

% -------------------------------------------------------------

\end{document}